\numberwithin{equation}{section}
\newcommand{\norm}[1]{\left\|#1\right\|}
\newcommand{\p}[1]{\left(#1\right)}
\newcommand{\set}[1]{\left\{#1\right\}}
\newcommand{\sett}[2]{\left\{#1 ~:~ #2 \right\}}
\newcommand{\abs}[1]{\left|#1\right|}
\newcommand{\R}{\mathbb{R}}
\newcommand{\mres}{\,\mathbin{\vrule height 1.6ex depth 0pt width
0.13ex\vrule height 0.13ex depth 0pt width 1.3ex}\,}
\def\ds{\displaystyle}
\def\XXint#1#2#3{{\setbox0=\hbox{$#1{#2#3}{\int}$ }
\vcenter{\hbox{$#2#3$ }}\kern-.6\wd0}}
\font\bigbf=cmbx10 at 16pt
\def\ds{\displaystyle}
\def\forall{\mathrm{for~all}~~}
\def\L{{\bf L}}
\def\ve{\varepsilon}
\def\vp{\varphi }
\def\R{\mathbb R}
\def\vp{\varphi}
\def\vs{\vskip 2em}
\def\v{\vskip 1em}
\def\C{{\mathcal C}}
\def\J{{\mathcal J}}
\def\bega{\begin{array}}
\def\enda{\end{array}}
\def\begi{\begin{itemize}}
\def\endi{\end{itemize}}
\def\loc{{\mathrm{loc}}}
\def\bel{\begin{equation}\label}
\def\eeq{\end{equation}}
\def\sqr#1#2{\vbox{\hrule height .#2pt
\hbox{\vrule width .#2pt height #1pt \kern #1pt
\vrule width .#2pt}\hrule height .#2pt }}
\def\square{\sqr74}
\def\endproof{\hphantom{MM}\hfill\llap{$\square$}\goodbreak}
\theoremstyle{plain}
\newtheorem{theorem}{Theorem}[section]
\newtheorem{corollary}[theorem]{Corollary}
\newtheorem{lemma}[theorem]{Lemma}
\newtheorem{proposition}[theorem]{Proposition}
\theoremstyle{definition}
\newtheorem{definition}[theorem]{Definition}
\begin{document}
\title{\bigbf SBV regularity for Burgers-Poisson equation}
\vs
\author{Steven Gilmore and Khai T. Nguyen\\
\\
 {\small  Department of Mathematics, North Carolina State University, }\\
 \\  {\small e-mails: sjgilmo2@ncsu.edu,  ~ khai@math.ncsu.edu}
 }

\maketitle
\begin{abstract}
The SBV regularity of weak entropy solutions to the Burgers-Poisson equation for initial data in $\L^1(\R)$ is considered. We show that the derivative of a solution consists of only the absolutely continuous part and the jump part.
\end{abstract}
\v

\noindent {\bf Keywords:} Burgers-Poisson equation, entropy weak solution, SBV regularity
\section{General setting}
The Burgers-Poisson equation is given by the balance law obtained from Burgers' equation by adding a nonlocal source term
\begin{equation}\label{BP}
u_t + \left({\frac{u^2}{ 2}}\right)_x=[G*u]_x\,.
\end{equation}
Here, $G(x)=-\frac{1}{ 2}e^{-|x|}$ is the Poisson Kernel such that
\[
[G*f](x)~=~\int_{-\infty}^{+\infty}G(x-y)\cdot f(y)~dy
\]
solves the Poisson equation
\begin{equation}\label{P}
\varphi_{xx}-\varphi~=~f\,.
\end{equation}
Equation (\ref{BP}) has been derived in \cite{W} as a simplified model of shallow water waves and admits conservation of both momentum and energy. For sufficiently regular initial data $u_0$, the local existence and uniqueness of solutions of (\ref{BP}) has been established in \cite{FC}. Additionally, their analysis of traveling waves showed that the equation features wave breaking in finite time. More generally, it has been demonstrated that (\ref{BP}) does not admit a global smooth solution (\cite{H-Liu}). Hence, it is natural to consider entropy weak solutions.

\begin{definition}
A function $u\in \mathbf{L}_{\loc}^{1}([0,\infty[\times\R)\cap {\bf L}^{\infty}_{\loc}(]0,\infty[, {\bf L}^{\infty}(\R))$ is an {\it entropy weak solution} of \eqref{BP} if $u$ satisfies the following properties:
\begin{itemize}
\item [(i)] the map $t\mapsto u(t,\cdot)$ is continuous with values in $\mathbf{L}^1(\R)$, i.e.,
\[
\|u(t,\cdot)-u(s,\cdot)\|_{{\bf L}^1(\R)}~\leq~L\cdot |t-s|\qquad\forall 0\leq s\leq t
\]
for some constant $L>0$.
\item[(ii)] For any $k\in\R$ and any non-negative test function $\phi\in C^1_c(]0,\infty[\times\R,\R)$ one has
\begin{equation*}
\ds\int\int \Big[|u-k|\phi_t+\mathrm{sign}(u-k)\Big(\frac{u^2}{2}-\frac{k^2}{ 2}\Big)\phi_x+\mathrm{sign}(u-k)[G_x*u(t,\cdot)](x)\phi\Big]~dx\,dt~\geq~0\,.
\end{equation*}
\end{itemize}
\end{definition}
 Based on the vanishing viscosity approach, the existence result for a global weak solution was provided for $u_0\in BV(\R)$ in \cite{FC}. However, this approach cannot be applied to the more general case with initial data in ${\bf L}^1(\R)$. Moreover, there are no uniqueness or continuity results for global weak entropy solutions of \eqref{BP} established in \cite{FC}. Recently, the existence and continuity results for global weak entropy solutions of \eqref{BP} were established for ${\bf L}^1(\R)$ initial data in \cite{GN}. The entropy weak solutions are constructed by a flux-splitting method.  Relying on the decay properties of the semigroup generated by Burgers equation and the Lipschitz continuity of solutions to the Poisson equation, approximating solutions satisfy an Oleinik-type inequality for any positive time. As a consequence, the sequence of approximating solutions is precompact and converges in \({\bf L}^1_{\loc}(\R)\). Moreover, using an energy estimate, they show that the characteristics are H\"older continuous, which is used to achieve the continuity property of the solutions. The Oleinik-type inequality gives that the solution $u(t,\cdot)$ is in $BV_{\loc}(\R)$ for every $t>0$. In particular, this implies that the Radon measure $Du(t,\cdot)$ is divided into three mutually singular measures
\[
  Du(t,\cdot)~=~D^au(t,\cdot)+D^ju(t,\cdot)+D^cu(t,\cdot)
\]
where $D^au(t,\cdot)$ is the absolutely continuous measure with respect to the Lebesgue measure, $D^ju(t,\cdot)$ is the {\it jump part} which is a countable sum of weighted Dirac measures, and $D^cu(t,\cdot)$ is the non-atomic singular part of the measure called the {\it Cantor part}. For a given $w\in BV_{\loc}(\R)$, the Cantor part of $Dw$ does not vanish in general. A typical example of $D^cw$ is the derivative of the Cantor-Vitali ternary function. If $D^cw$ vanishes then we say the function $w$ is locally in the space of special functions of bounded variation, denoted by $SBV_{\loc}(\R)$. The space of $SBV_{\loc}$ functions was first introduced in \cite{GL} and plays important role in the theory of image segmentation and with variational problems in fracture mechanics.  Motived by results on $SBV$ regularity for hyperbolic conservation laws (\cite{AC,R, BC,M}), we show that

\begin{theorem}\label{SBV} Let $u:[0,\infty[\times\R\to\R$ be the unique locally $BV$-weak entropy solution of (\ref{BP}) with initial data $u_0\in{\bf L}^1(\R)$. Then there exists a countable set $\mathcal{T}\subset\R^+$ such that
\[
u(t,\cdot)~\in~ SBV_{\loc}(\R)\qquad\forall t\in \R^+\setminus\mathcal{T}\,.
\]
\end{theorem}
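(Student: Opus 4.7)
The plan is to adapt the Ambrosio--De Lellis strategy \cite{AC} developed for the pure Burgers equation, treating the nonlocal source $[G*u]_x$ as a Lipschitz perturbation that can be absorbed into the bookkeeping. The starting point is the Oleinik-type inequality $u_x(t,\cdot)\le \psi(t)$ (in the distributional sense) for some function $\psi:(0,\infty)\to\R$ locally bounded on $(0,\infty)$, which is already available from \cite{GN}. Equivalently, the Radon measure
\[
\mu_t \;:=\; \psi(t)\,\mathcal{L}^1 - Du(t,\cdot)
\]
is nonnegative on $\R$ for every $t>0$, and since $\psi(t)\mathcal{L}^1$ has no Cantor part, $D^c u(t,\cdot)=-\mu_t^c$. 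The theorem thus reduces to proving that $\mu_t^c \equiv 0$ for every $t$ outside some countable set $\mathcal{T}\subset\R^+$.

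Next I would establish an almost-monotonicity property: for every bounded interval $I=[a,b]\subset\R$ and every compact time window $[\tau_1,\tau_2]\subset(0,\infty)$, there exists a constant $K=K(I,\tau_1,\tau_2,u)$ such that $t\mapsto \mu_t(I)+Kt$ is non-decreasing on $[\tau_1,\tau_2]$. The proof rests on the identity $\mu_t(I)=\psi(t)(b-a)-(u(t,b+)-u(t,a-))$ together with the entropy inequality applied near the endpoints $a,b$ and on the uniform bound $\|[G_x*u(t,\cdot)]\|_{\L^\infty}\le \|G_x\|_{\L^1}\,\|u(t,\cdot)\|_{\L^\infty}$, which is finite on $[\tau_1,\tau_2]$ by the very definition of entropy weak solution: the Burgers flux contributes a monotone term while the nonlocal source contributes a Lipschitz error absorbed into $Kt$. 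Exhausting $\R\times(0,\infty)$ by countably many such rectangles and using that a monotone function has at most countably many discontinuities yields a countable set $\mathcal{T}\subset\R^+$ outside of which $t\mapsto\mu_t(I_n)$ is continuous for every $n$.

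The crux of the argument is then to show that $\mu_\tau^c(I)=0$ for every bounded $I$ and every $\tau\in\R^+\setminus\mathcal{T}$. Suppose by contradiction that $\mu_\tau^c(I)=\delta>0$. Using the characteristics constructed in \cite{GN} together with the Riccati-type ODE $\dot v = -v^2 + u + G*u$ formally satisfied along characteristics by $v=u_x$ (obtained by differentiating \eqref{BP} in $x$), one expects that the continuum of characteristics carrying the Cantor mass at time $\tau$ must collide into genuine shocks within arbitrarily short time $h>0$, producing a strict drop $\mu_{\tau+h}(I)\le \mu_\tau(I)-c\,\delta$ with $c$ independent of small $h$. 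Together with the lower bound $\mu_\tau(I)\le \mu_{\tau+h}(I)+Kh$ from the preceding step, this forces $c\delta\le Kh$ for every small $h>0$, hence $\delta=0$.

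The main obstacle is this collapse-of-characteristics argument in the nonlocal setting. In the pure Burgers case the Lax--Oleinik formula makes characteristics piecewise linear and the dissipation of the Cantor part explicit; for Burgers--Poisson the characteristics from \cite{GN} are only H\"older continuous and are coupled globally through $G*u$. The plan is to localize in space--time cells so small that $G_x*u$ is essentially constant on the cell, which is legitimate since $G_x*u$ is globally Lipschitz with Lipschitz constant controlled by $\|u(t,\cdot)\|_{\L^\infty}+\|u(t,\cdot)\|_{\L^1}$, thereby reducing the local problem to a Burgers equation with a frozen linear source, to which the Ambrosio--De Lellis dissipation estimate applies modulo an $O(h)$ correction that can be absorbed in $K$. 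An alternative path is to invoke the more general SBV frameworks of \cite{R,BC} once the relevant BV bounds on the flux trace have been established.
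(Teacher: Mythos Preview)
Your proposal has a genuine gap at the crux, Step~3. The claimed strict drop $\mu_{\tau+h}(I)\le \mu_\tau(I)-c\delta$ does not follow from the collapse of characteristics: when the Cantor mass at time $\tau$ is converted into shocks by time $\tau+h$, the resulting jump part is still fully counted in $\mu_{\tau+h}=\psi(\tau+h)\,\mathcal{L}^1-Du(\tau+h,\cdot)$. A transfer of singular mass from the Cantor component to the atomic component leaves $Du(t,\cdot)(I)$, and hence $\mu_t(I)$, essentially unchanged (up to boundary fluxes and the source term, both of order $O(h)$). No mechanism produces a loss of fixed size $c\delta$ independent of $h$, so the contradiction $c\delta\le Kh$ cannot be closed. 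Step~2 is also not justified: controlling $\mu_t(I)=\psi(t)|I|-\bigl(u(t,b+)-u(t,a-)\bigr)$ requires bounding the time derivative of $u$ at \emph{fixed} spatial points $a,b$, but $\partial_t u=-u\,u_x+[G*u]_x$ involves $u_x$, which is unbounded below exactly where Cantor mass concentrates; the entropy inequality near the endpoints does not control this term.

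The paper avoids both difficulties by replacing $\mu_t(I)$ with a geometric functional that detects only the jump part. Fixing a reference level $\sigma/2$ and a spatial window $A_t$ bounded by two genuine backward characteristics (so that the domain moves with the flow and no boundary-flux issues arise), one sets
\[
F_\sigma(t)~=~\sum_{x\in \J(t)\cap A_t}\bigl|I_{(t,x)}(\sigma/2)\bigr|,
\]
the total length, at time $\sigma/2$, of the backward characteristic cones emanating from the current jump points. Non-crossing of genuine characteristics makes $F_\sigma$ monotone non-decreasing and bounded, hence with at most countably many jumps. The key lemma (proved via the characteristic ODE \eqref{ODE-C} and the estimates of Lemma~\ref{bsc}) shows that if the Cantor part of $Du(\tau,\cdot)$ does not vanish on $A_\tau$, then for every $\varepsilon>0$ the shocks present at time $\tau+\varepsilon$ generate backward cones whose feet at level $\sigma/2$ have measure at least a fixed multiple of $|\mu_\tau^c(A_\tau)|$ and lie outside the cones already accounted for in $F_\sigma(\tau)$; hence $F_\sigma(\tau+)-F_\sigma(\tau)>0$. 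This is the correct quantitative form of ``Cantor mass becomes shocks,'' and it is precisely the Robyr framework \cite{R} you list as an alternative at the end---that alternative is in fact the route the paper takes, adapted to handle the nonlocal source through the characteristic estimates of Lemma~\ref{bsc}.
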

As a consequence, the slicing theory of $BV$ functions and the chain rule of Vol'pert \cite{AFP} implies that the weak entropy solution $u$ is in $SBV_{\loc}([0,+\infty[\times\R)$. This is the first example of the SBV regularity for scalar conservation laws with nonlocal source term. A common theme in the proofs of recent results on $SBV$ regularity involve an appropriate geometric functional which has certain monotonicity properties and jumps at time $t$ if $u(t,\cdot)$ does not belong to $SBV$ (see e.g. in \cite{AC}). More precisely, let $\J(t)$ be the set of jump discontinuities $\J(t)$ of $u(t,\cdot)$. For each $x_j \in \J(t)$, there are minimal and maximal backward characteristics $\xi_j^-(s)$ and $\xi_j^+(s)$ emanating from $(t,x_j)$ which define a nonempty interval $\ds I_j(s) := \,]\xi_j^-(s),\xi_j^+(s)[$ for any $s <t$. In this case, the functional $F_s(t)$ defined as the sum of the measures of $I_j(s)$ is monotonic and bounded. Relying on a careful study of generalized characteristics, one shows that if the measure $Du(t,\cdot)$ has a non-vanishing Cantor part then the function $F_s$ ``jumps'' up at time $t$ which implies that the Cantor part is only present at countably many $t$. Due to the nonlocal source,  $u(t,\cdot)$ does not necessarily have compact support. Thus, we approach the domain by first looking  at compact sets and then ``glue'' the sections together to recover the full domain.

%


\section{Preliminaries}
\subsection{{\it BV} and {\it SBV} functions}
Let us now introduce the concept of functions of bounded variation in $\R$. We refer to \cite{AFP} for a comprehensive analysis.
\begin{definition} Given an open set $\Omega\subseteq\R$, let $w$ be in $\L^1(\Omega)$. We say that $w$ is a {\it function of bounded variation in} $\Omega$ (denoted by $w \in BV(\Omega)$) if the distributional derivative of $w$ is representable by a finite Radon measure $Du$ on $\Omega$, i.e.,
    \[
      - \int_\Omega w \cdot \varphi' ~ dx ~=~ \int_\Omega \vp ~dDw \qquad \forall \vp \in \C^\infty_c(\Omega)
    \]
    with {\it total variation} (denoted by $\norm{Dw}$) given by
    \[
      \norm{Dw}(\Omega) ~=~ \sup \sett{ \int_\Omega w \cdot \varphi'~ ~ dx }{ \vp \in \C^\infty_c(\Omega),\,\, \|\vp\|_{{\bf L}^{\infty}}\leq 1} \,.
    \]
    Moreover, $w$ is of {\it locally bounded variation} on $\Omega$ (denoted by $w \in BV_{\loc}(\Omega)$) if $w \in {\bf L}^1_{\loc}(\Omega)$ and $w$ is in $BV(U)$ for all $U \subset\subset \Omega$.
\end{definition}
Given $w\in BV_{loc}(\R)$, we split $Dw$ into the {\it absolutely continuous part} $D^a w$ and {\it singular part} $D^s w$ provided by the Radon-Nikod\'ym theorem (see e.g. \cite[Theorem 1.28]{AFP}). In the $1$-D case, the singular part is concentrated on the ${\bf L}^1$-negligible set
\[
S_w~=~\left\{t\in\R~\Big|~\lim_{\delta\to 0}\dfrac{|Dw|(t-\delta,t+\delta)}{ |\delta|}~=~+\infty\right\}\,.
\]
We can further decompse $D^sw$ by isolating the set of atoms
$A_w=\left\{t\in\R~\big|~Dw(\{t\})\neq0\right\}$,
contained in $S_w$. Hence, we can consider two mutually singular measures
\[
    D^jw~:=~ D^sw \mres A_w\qquad\mbox{and} \qquad D^cw~:=~D^sw \mres (S_w \setminus A_w)
\]
respectively called the {\it jump part} of the derivative and the {\it Cantor part} of the derivative. Furthermore, we have the following structure result (see e.g. \cite[Theorem 3.28]{AFP})
\begin{proposition} Let $\Omega \subseteq \R$ and $w \in BV(\Omega)$. Then, for any $x \in A_w$, the left and right hand limits of $w(x)$ exist and
  \[
      D^jw ~=~ \sum_{x\in A_w} \left(w(x+) - w(x-)\right) \delta_x
  \]
where $w(x\pm)$ denote the one-sided limits of $w$ at $x$. Moreover, $D^cw$ vanishes on any sets which are $\sigma$-finite with respect to $\mathcal{H}^0$.
\end{proposition}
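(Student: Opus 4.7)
The plan is to reduce the statement to properties of the cumulative distribution of the finite Radon measure $Dw$. Since $\Omega\subseteq\R$ is open, I fix a connected component $(a,b)\subseteq\Omega$ and a base point $x_0\in(a,b)$. I would then introduce the pointwise representative
\[
\widetilde w(x)~:=~c + Dw\bigl((x_0,x]\bigr),\qquad x> x_0,
\]
with an analogous definition for $x<x_0$ and a suitable constant $c$. A routine integration-by-parts computation against $\vp\in C^\infty_c(\Omega)$ shows that $\widetilde w$ has the same distributional derivative as $w$, hence coincides with $w$ for $\mathcal{L}^1$-almost every $x$. Since $D^aw$, $D^sw$, $D^jw$, $D^cw$ depend only on the equivalence class of $w$, and one-sided limits are representative-independent for $BV$ functions, I may work with $\widetilde w$.

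For this representative, continuity of the finite measure $Dw$ on nested half-open intervals yields, for every $x\in\Omega$,
\[
\widetilde w(x+)~=~c + Dw\bigl((x_0,x]\bigr),\qquad \widetilde w(x-)~=~c + Dw\bigl((x_0,x)\bigr),
\]
so both one-sided limits exist and
\[
\widetilde w(x+)-\widetilde w(x-)~=~Dw(\{x\}).
\]
In particular, on $A_w=\{x:Dw(\{x\})\neq0\}$ the left and right limits exist and differ, giving the first assertion. This identification $\widetilde w(x+)-\widetilde w(x-)=Dw(\{x\})$ is the main technical point; once it is in place, everything else becomes measure-theoretic bookkeeping.

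For the jump formula, I first observe that $A_w$ is at most countable: $|Dw|$ is a finite (or $\sigma$-finite) measure and each atom carries strictly positive mass. Because $D^aw\ll\mathcal{L}^1$, every singleton is $D^aw$-null, so $D^sw(\{x\})=Dw(\{x\})$ for every $x\in\R$. Combining this with the definition $D^jw:=D^sw\mres A_w$ and countable additivity produces
\[
D^jw~=~\sum_{x\in A_w}D^sw(\{x\})\,\delta_x~=~\sum_{x\in A_w}\bigl(w(x+)-w(x-)\bigr)\,\delta_x,
\]
which is the claimed expression.

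Finally, a subset of $\R$ is $\sigma$-finite with respect to $\mathcal{H}^0$ (i.e., counting measure) precisely when it is at most countable, so it suffices to prove that $D^cw$ has no atoms. For $x\in A_w$, the set $\{x\}$ is disjoint from $S_w\setminus A_w$, hence $D^cw(\{x\})=0$ directly from the definition $D^cw=D^sw\mres(S_w\setminus A_w)$. For $x\notin A_w$, we have $Dw(\{x\})=0$ and $D^aw(\{x\})=0$, forcing $|D^sw|(\{x\})=0$ and therefore $D^cw(\{x\})=0$. Countable additivity then gives $D^cw(E)=0$ for every at most countable $E$, completing the proof. The only substantive obstacle is the rigorous identification of the jump of $\widetilde w$ with $Dw(\{x\})$, which rests on choosing the correct right-continuous representative and invoking continuity from above and below of the Radon measure $Dw$ on nested intervals.
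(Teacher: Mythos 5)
Your proof is correct, but note that the paper does not prove this proposition at all: it is quoted as a known structure result with a pointer to \cite[Theorem~3.28]{AFP}, so the only meaningful comparison is with that reference. Your route --- representing $w$ on each connected component of $\Omega$ by the right-continuous primitive $\widetilde w(x)=c+Dw((x_0,x])$ of the measure, identifying $\widetilde w(x+)-\widetilde w(x-)=Dw(\{x\})$ via continuity of the finite (signed) measure along nested half-open intervals, and then reducing the jump formula and the atomlessness of $D^cw$ to countable additivity --- is the classical ``distribution function'' argument, whereas \cite{AFP} reaches the same statement through the theory of pointwise variation and good representatives. The two are close in spirit (both hinge on producing a representative with one-sided limits everywhere whose jumps are exactly the atoms of $Dw$), but yours is more self-contained and elementary for the $1$-D case, while the AFP machinery is built to interface with the general $N$-dimensional decomposition. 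Two cosmetic points worth tightening: the identity $\widetilde w=w$ a.e.\ on a component uses that a distribution on an interval with vanishing derivative is a.e.\ constant (connectedness is essential, and you correctly work component by component, with one constant $c$ per component); and ``vanishes on'' should be read as $|D^cw|(E)=0$, which your singleton argument does give, since $|D^cw|(\{x\})=|D^cw(\{x\})|$ for a singleton and $|D^cw|$ is countably additive. Neither affects correctness.
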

\begin{definition} Let $w$ be in $BV_{loc}(\R)$ then $w$ is a {\it special function of bounded variation} (denote by $w\in SBV$) if the Cantor part $D^cw$ vanishes.
\end{definition}
We want to show that the weak entropy solutions of \eqref{BP} belong to $SBV$.

\subsection{Oleinik-type inequality and non-crossing of characteristics }
The global existence and $BV$-regularity of \eqref{BP} was studied extensively in \cite{GN}. For convenience, we recall their main results here.
\begin{theorem}  The Cauchy problem \eqref{BP}-\eqref{P} with initial data $u_0 = u(0,\cdot) \in {\bf L}^1(\R)$ admits a unique solution $u(t,x)$ such that for all $t>0$ the following hold:
  \begin{enumerate}[(i)]
    \item the ${\bf L}^1$-norm is bounded by
    \begin{equation}\label{L1bound}
    \|u(t,\cdot)\|_{{\bf L}^{1}(\R)}~\leq~e^{t}\cdot \|u_0\|_{{\bf L}^1(\R)} \,;
    \end{equation}
    \item the solution satisfies the following Oleinik-type inequality
    \begin{equation}\label{O}
      u(t,y) - u(t,x) ~\leq~ \frac{K_t}{t}\cdot(y -x) \qquad \forall~ y > x
    \end{equation}
    with $K_t ~=~ 1+2t+2t^2+4t^2e^{t}\cdot \|u_0\|_{{\bf L}^1(\R)}$;
    \item the ${\bf L}^\infty$-norm is bounded by
    \begin{equation}\label{LinftyB}
      \norm{u(t,\cdot)}_{{\bf L}^\infty(\R)}~\leq~\sqrt{ \frac{2 K_t}{t} \norm{u(t,\cdot)}_{{\bf L}^1(\R)}}~\leq~\sqrt{\frac{2K_te^t}{t} \norm{u_0}_{{\bf L}^1(\R)}} \,.
   \end{equation}
 \end{enumerate}
\end{theorem}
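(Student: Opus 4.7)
The plan is to construct approximate solutions by a flux-splitting scheme, derive the three bounds uniformly on the approximations, pass to the limit via $BV_{\loc}$ compactness, and prove uniqueness by a Kruzhkov-type doubling-of-variables argument in which the non-local source is controlled by the $\mathbf{L}^1$-Lipschitz continuity of convolution with $G$. More precisely, I would mollify $u_0$ to $u_0^\varepsilon \in \mathbf{L}^1 \cap \mathcal{C}^1_c(\R)$ and, on each time slab $[n\varepsilon,(n+1)\varepsilon)$, alternate the entropy solution operator for the homogeneous Burgers equation $u_t+(u^2/2)_x=0$ with the forward-Euler source step $u \mapsto u+\varepsilon[G*u]_x$.

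The $\mathbf{L}^1$ bound (i) is the easiest. Since $G_x(x) = \tfrac{1}{2}\mathrm{sign}(x)e^{-|x|}$ has $\|G_x\|_{\mathbf{L}^1(\R)} = 1$, Young's inequality yields $\|[G*u]_x\|_{\mathbf{L}^1} \leq \|u\|_{\mathbf{L}^1}$; combined with the $\mathbf{L}^1$-contraction of the Burgers step this gives $\|u^\varepsilon\|_{\mathbf{L}^1}\leq (1+\varepsilon)^{t/\varepsilon}\|u_0^\varepsilon\|_{\mathbf{L}^1}$, and sending $\varepsilon \to 0$ produces the factor $e^t$. Given (i) and (ii), the $\mathbf{L}^\infty$ bound (iii) follows from a short geometric triangle argument: if $M := u(t,x_0)>0$, the Oleinik inequality forces $u(t,y) \geq M - (K_t/t)(x_0-y)$ on $[x_0-Mt/K_t, x_0]$, whence $\|u(t,\cdot)\|_{\mathbf{L}^1} \geq M^2 t/(2K_t)$ and $M \leq \sqrt{2K_t\|u(t,\cdot)\|_{\mathbf{L}^1}/t}$, with the negative case symmetric.

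The Oleinik-type bound (ii) is the main obstacle. Differentiating the smooth approximate equation in $x$, the derivative $v = u_x$ satisfies, along a characteristic $\dot\xi = u$, the Riccati-type identity $\dot v + v^2 = [G*u]_{xx}$; using the Green identity $G_{xx}-G = \delta$, this reduces to $\dot v + v^2 = u + G*u$, whose right-hand side is pointwise bounded by $\|u(t,\cdot)\|_{\mathbf{L}^\infty} + \tfrac{1}{2}\|u(t,\cdot)\|_{\mathbf{L}^1}$. The classical Oleinik argument applied to each Burgers half-step yields $v \leq 1/(t-n\varepsilon)$ at the end of the $n$-th slab independently of the size of $v$ at its start, while each source step perturbs $v$ by at most $\varepsilon \cdot (u + G*u)$; telescoping these corrections across the $\sim t/\varepsilon$ slabs and taking $\varepsilon \to 0$ produces a decay of the form $K_t/t$ with $K_t = 1 + 2t + 2t^2 + 4t^2 e^t\|u_0\|_{\mathbf{L}^1}$. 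The subtle point is the apparent circularity between (ii) and (iii): the Riccati right-hand side asks for $\|u\|_{\mathbf{L}^\infty}$, which itself will be proved via (iii). I would break this by carrying out the analysis for the smooth approximants first (with the crude $\|u_0^\varepsilon\|_{\mathbf{L}^\infty}$ serving as an a priori bound that is then self-improved through the triangle estimate for (iii)), and verifying that the resulting constant depends only on $\|u_0\|_{\mathbf{L}^1}$ and $t$, so that the bounds persist in the $\mathbf{L}^1$-datum limit. Once (i)--(iii) hold uniformly in $\varepsilon$, the Oleinik bound together with (i) and (iii) yields uniform $BV_{\loc}$ bounds on $u^\varepsilon(t,\cdot)$ for every $t>0$; Helly's theorem then produces an $\mathbf{L}^1_{\loc}$-limit that inherits all three estimates and passes to the limit in the Kruzhkov entropy inequality, while uniqueness follows from doubling-of-variables where the non-local term contributes $\|G_x\|_{\mathbf{L}^1}\|u-\widetilde u\|_{\mathbf{L}^1} = \|u-\widetilde u\|_{\mathbf{L}^1}$, giving $\mathbf{L}^1$-stability by Gronwall.
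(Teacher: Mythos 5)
This theorem is not proved in the paper itself---it is recalled verbatim from Grunert--Nguyen \cite{GN}, whose method the introduction summarizes as precisely your plan: a flux-splitting scheme alternating the Burgers entropy semigroup with the nonlocal source step, an Oleinik-type inequality obtained from the decay of the Burgers semigroup combined with the Lipschitz continuity of $[G*u]_x$, and then precompactness and passage to the limit in $\mathbf{L}^1_{\loc}$. Your proposal therefore follows essentially the same route as the cited proof, and its key ingredients check out ($\|G_x\|_{\mathbf{L}^1}=1$ for part (i), the identity $[G*u]_{xx}=u+G*u$ giving the Riccati bound for part (ii), and the standard triangle argument deducing (iii) from (i)--(ii)).
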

In particular, this implies that that for all $t>0$, $u(t,\cdot)$ is in $BV_{\loc}(\R)$ and satisfies 
\begin{equation}\label{Lax}
  u(t,x-)~\geq~u(t,x+) \qquad \forall x \in \R.
\end{equation}
We recall the definition and theory of generalized characteristic curves associated to (\ref{BP}). For a more in depth theory of generalized characteristics, we direct the readers to \cite{CD1}.

\begin{definition}
For any $(t,x)\in ]0,+\infty[\times\R$, an absolutely continuous curve $\xi_{(t,x)}(\cdot)$ is called a {\it backward characteristic curve} starting from $(t,x)$ if it is a solution of differential inclusion
\begin{equation}\label{d-incl-b}
\dot \xi_{(t,x)}(s)~\in~\left[u\left(s,\xi_{(x,t)}(s)+\right),u\left(s,\xi_{(t,x)}(s)-\right)\right]\qquad a.e.~s\in [0,t]
\end{equation}
with $\xi_{(t,x)}(t)=x$. If $s\in [t,+\infty[$ in (\ref{d-incl-b}) then $\xi$ is called a {\it forward characteristic curve}, denoted by $\xi^{(t,x)}(\cdot)$. The characteristic curve $\xi$ is called {\it genuine} if $u(t,\xi(t)-)=u(t,\xi(t)+)$ for almost every $t$.
\end{definition}

The existence of backward (forward) characteristics was studied by Fillipov.  As in \cite{CD1} and \cite{R}, the speed of the characteristic curves are determined and genuine characteristics are essentially classical characteristics:

\begin{proposition}\label{Charact} Let $\xi: [a,b]\to\R$ be a characteristic curve for the Burgers-Poisson equation (\ref{BP}), associated with an entropy solution $u$. Then for almost every time $t\in [a,b]$, it holds that
\begin{equation}\label{PCharac}
\dot{\xi}(t)~=~
\begin{cases}
u(t,\xi(t))\qquad&\mathrm{if}\qquad  u(t,\xi(t)+)~=~u(t,\xi(t)-)  \,,\\[3mm]
\dfrac{u(t,\xi(t)+)+u(t,\xi(t)-)}{ 2 }\qquad  &\mathrm{if}\qquad  u(t,\xi(t)+)~<~u(t,\xi(t)-)\,.
\end{cases}
\end{equation}
In addition, if $\xi$ is genuine on $[a,b]$, then there exists $v(t) \in C^1([a,b])$ such that
\[
  u(t,\xi(t)-) ~=~ v(t) ~=~ u(t,\xi(t)+) \qquad \forall t \in ]a,b[
\]
and $(\xi(\cdot),v(\cdot))$ solve the system of ODEs
\begin{equation}\label{ODE-C}
\begin{cases}
  \dot \xi(t)~=~ v(t) \\[2mm]
  \dot v(t)~=~ [G*u(t,\cdot)]_x(\xi(t))
\end{cases}
\qquad\forall t\in ]a,b[ \,.
\end{equation}
\end{proposition}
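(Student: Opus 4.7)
The plan is to treat the speed identity (\ref{PCharac}) and the ODE system (\ref{ODE-C}) separately, using the genuineness hypothesis in the second part to pass from a distributional assertion to a pointwise ODE.

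For (\ref{PCharac}), fix a time $t \in [a,b]$ where the absolutely continuous curve $\xi$ is differentiable; this holds almost everywhere by Lebesgue's theorem. The differential inclusion (\ref{d-incl-b}) combined with the Lax-type inequality (\ref{Lax}) forces
\[
\dot{\xi}(t) \in [u(t,\xi(t)+),\, u(t,\xi(t)-)].
\]
When $u(t,\xi(t)+) = u(t,\xi(t)-)$, this interval collapses to a single point, yielding the first branch of (\ref{PCharac}). At jump points, I would argue as follows: if $t_0$ is a Lebesgue point of $\dot{\xi}$ and a density point of the set $E := \{t \in [a,b] : u(t,\xi(t)+) < u(t,\xi(t)-)\}$, then $\xi$ must locally coincide with a shock curve of $u$. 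Indeed, a consequence of the Oleinik-type inequality (\ref{O}) together with the BV structure is that the jump set of $u$ decomposes into a countable union of Lipschitz shock curves; along each such curve the Rankine-Hugoniot condition associated with the local flux $u^2/2$ prescribes the propagation speed $(u_+ + u_-)/2$. Since $[G*u]_x$ is continuous in $x$ (because $G_x \in \L^{\infty}(\R)$ and $u(t,\cdot) \in \L^1(\R)$), the nonlocal source contributes no jump and does not affect the shock speed. This forces $\dot{\xi}(t_0) = (u_+(t_0)+u_-(t_0))/2$ at such density points, covering $E$ up to a null set.

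For (\ref{ODE-C}), assume $\xi$ is genuine, so $u$ is continuous along $\xi$ at a.e. $t \in [a,b]$, and set $v(t) := u(t, \xi(t))$ on this full-measure set. Formal differentiation along the PDE (\ref{BP}) gives
\[
\dot{v}(t) = u_t(t,\xi(t)) + u_x(t,\xi(t))\, \dot{\xi}(t) = [G*u(t,\cdot)]_x(\xi(t)),
\]
using $\dot{\xi}(t) = u(t,\xi(t)) = v(t)$ from the first part. To justify this rigorously without pointwise derivatives of $u$, I would test the entropy-weak formulation against a smooth indicator of a thin tube around the graph of $\xi$, pass to the limit, and derive the integral identity
\[
v(t_2) - v(t_1) = \int_{t_1}^{t_2} [G*u(s,\cdot)]_x(\xi(s))\, ds \qquad \text{for a.e. } t_1 < t_2 \text{ in } [a,b].
\]
The integrand is continuous in $s$ because $G_x \in \L^{\infty}(\R)$, $s \mapsto u(s,\cdot)$ is $\L^1$-continuous, and $\xi$ is continuous. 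Consequently $v$ extends uniquely to a $C^1$ function on $[a,b]$ with derivative $[G*u(t,\cdot)]_x(\xi(t))$, and the identity $\dot{\xi} = v$ then holds on all of $]a,b[$ since both sides are continuous and agree a.e.

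The main technical difficulty lies in part one: verifying that $\dot{\xi}(t_0) = (u_++u_-)/2$ at density points of $E$. The argument requires showing that $\xi$ genuinely travels along one of the Lipschitz shock curves on a full-measure subset of $E$ near $t_0$, which rests on a careful use of the Oleinik inequality together with the observation that the nonlocal source acts as a merely Lipschitz, rather than jump-producing, perturbation. An alternative but technically heavier route is to exploit the flux-splitting approximations constructed in \cite{GN}: for the smooth approximations classical characteristics with the associated ODE system are available, and one then passes to the limit using $\L^1_{\loc}$ convergence and the uniform regularity of the convolved source term.
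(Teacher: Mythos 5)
You should first note that the paper does not actually prove Proposition \ref{Charact}: it is quoted from the theory of generalized characteristics, citing Dafermos \cite{CD1} and Robyr \cite{R}, the only equation-specific ingredient being that the nonlocal source $[G*u(t,\cdot)]_x$ is bounded and Lipschitz in $x$ (this is exactly what estimate (\ref{G-lip}) later makes quantitative), which places \eqref{BP} within the scope of Robyr's balance-law version of Dafermos's results. Measured against that standard argument, your overall architecture --- Filippov inclusion, Rankine--Hugoniot speed at jump times, an integral identity along genuine characteristics, continuity of the convolved source --- is the right one, but the sketch has genuine gaps at precisely the points where the classical proof does real work.

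Concretely: (a) For \eqref{PCharac} at density points of $E$, you invoke a decomposition of the jump set into countably many Lipschitz shock curves along which Rankine--Hugoniot holds. That structure theorem is of comparable depth to the statement being proved and is not simply ``a consequence of the Oleinik inequality and BV structure''; moreover, identifying $\dot\xi(t_0)$ with the curve's speed requires $t_0$ to be simultaneously a density point of the coincidence set, a differentiability point of $\xi$ and of the shock curve, and a point where the traces satisfy Rankine--Hugoniot --- none of which you arrange. Dafermos's argument avoids all of this: one tests the weak formulation on shrinking rectangles centered at $(t_0,\xi(t_0))$ with slope $\dot\xi(t_0)$; the bounded source contributes only $O(h^2)$, and the relation $\dot\xi(t_0)\,(u_+-u_-)=\tfrac{1}{2}(u_+^2-u_-^2)$ drops out directly, which together with the inclusion $\dot\xi\in[u_+,u_-]$ gives both branches. (b) More seriously, the second half of the proposition asserts $u(t,\xi(t)-)=v(t)=u(t,\xi(t)+)$ for \emph{every} $t\in\,]a,b[$, while genuineness only gives equality of traces a.e.; you define $v$ on a full-measure set and extend by continuity, but never show the extension equals the one-sided traces at every interior time. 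This upgrade requires jump persistence (once $u(t_0,x_0+)<u(t_0,x_0-)$ the forward shock never disappears, cf.\ Proposition \ref{gen-char}~(iii)), so that a genuine characteristic cannot sit on a shock at an interior time; without it your $C^1$ conclusion concerns an abstract extension of $v$, not the solution along $\xi$. (c) The thin-tube derivation of $v(t_2)-v(t_1)=\int_{t_1}^{t_2}[G*u(s,\cdot)]_x(\xi(s))\,ds$ is under-justified: after normalizing by the tube width $2\ve$, the lateral relative-flux terms $\tfrac{1}{2\ve}\int_{t_1}^{t_2}\big[\tfrac{u^2}{2}-\dot\xi u\big]_{x=\xi(s)-\ve}^{x=\xi(s)+\ve}\,ds$ have integrands that are only $o(1)$ pointwise, not $o(\ve)$, and small shocks inside the tube must be controlled via the Oleinik bound and comparison with minimal/maximal backward characteristics --- in Dafermos's treatment this is the technically hard half of the proposition, contrary to your assessment that part one carries the main difficulty. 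Your closing alternative (passing to the limit along the flux-splitting approximations of \cite{GN}) is viable in principle but would itself need compactness of the approximate characteristics and stability of the differential inclusion under $\L^1_{\loc}$ convergence, which you do not supply.
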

Backward characteristics $\xi_{(t,x)}(\cdot)$ are confined between a maximal and minimal backward characteristics, as defined in \cite{CD1} $\big($denoted by $\xi_{(t,x+)}(\cdot)$ and $\xi_{(t,x-)}(\cdot)$ $\big)$. Relying on the above proposition and (\ref{Lax}), we can obtain properties of generalized characteristics, associated with entropy solutions of the Burgers-Poisson equation, including the non-crossing property of two genuine characteristics.

\begin{proposition}\label{gen-char} Let $u$ be an entropy solution to \eqref{BP}. Then for any $(t,x)\in ]0+\infty[\times\R$, the following holds:
\begin{itemize}
\item [(i)] The maximal and minimal backward characteristics $\xi_{(t,x\pm)}$ are genuine and thus the function $u\left(\tau,\xi_{(t,x\pm)}(\tau)\right)$ solves (\ref{ODE-C}) for $\tau \in ]0,t[$ with initial data $u(t,\xi_{(t,x\pm)}(t))$.
\item [(ii)] [Non-crossing of genuine characteristics] Two genuine characteristics may intersect only at their endpoints.
\item [(iii)] If $u(t,\cdot)$ is discontinuous at a point $x$, then there is a unique forward characteristic $\xi^{(t,x)}$ which passes though $(t,x)$ and
\[
u\left(\tau,\xi^{(t,x)}(\tau)-\right)~>~u\left(\tau,\xi^{(t,x)}(\tau)+\right)\qquad\forall \tau \geq t \,.
\]
\end{itemize}
\end{proposition}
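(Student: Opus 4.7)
The plan is to adapt Dafermos' theory of generalized characteristics (as developed in \cite{CD1}) to the Burgers-Poisson balance law. The crucial preliminary is that the nonlocal source $h(\tau,x) := [G*u(\tau,\cdot)]_x(x)$ is Lipschitz in $x$ uniformly on compact subintervals $[\tau_0,T] \subset\, ]0,\infty[$. Indeed, since $G$ is the Green's function of \eqref{P}, the distributional identity $[G*u]_{xx} = [G*u] - u$ combined with the bounds \eqref{L1bound} and \eqref{LinftyB} yields a uniform bound on $\|h_x(\tau,\cdot)\|_{{\bf L}^\infty(\R)}$. Consequently the Cauchy problem for \eqref{ODE-C} admits a unique solution for each initial datum $(t_0,x_0,v_0)$, and solutions depend continuously on data.

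For (i), fix $(t,x)$ and let $(\tilde\xi,\tilde v)$ be the maximal backward solution of \eqref{ODE-C} with $\tilde\xi(t)=x$, $\tilde v(t)=u(t,x+)$. The first step is to show $\tilde\xi$ satisfies the differential inclusion \eqref{d-incl-b}, so that it is in fact a backward characteristic from $(t,x)$. This uses the Oleinik bound \eqref{O} together with \eqref{Lax}, which force the one-sided values $u(\tau,\tilde\xi(\tau)\pm)$ to remain close enough to $\tilde v(\tau)$ that the ODE velocity $\dot{\tilde\xi}(\tau)=\tilde v(\tau)$ lies inside the admissible interval. Maximality of $\xi_{(t,x+)}$ then gives $\tilde\xi \leq \xi_{(t,x+)}$, while a converse Oleinik-driven comparison on the difference $\xi_{(t,x+)} - \tilde\xi$ yields equality. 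Having identified $\xi_{(t,x+)}$ with the ODE solution, genuineness follows by contradiction: if $u(\tau_0,\xi_{(t,x+)}(\tau_0)-) > u(\tau_0,\xi_{(t,x+)}(\tau_0)+)$ at some interior $\tau_0$, the same construction applied at $(\tau_0,\xi_{(t,x+)}(\tau_0))$ with the left-trace value as initial velocity produces a second genuine curve emerging strictly to the right of $\xi_{(t,x+)}$, contradicting maximality at $(t,x)$. The treatment of $\xi_{(t,x-)}$ is symmetric.

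For (ii), let $\xi_1,\xi_2$ be genuine characteristics meeting at an interior time $\tau_0$ with $\xi_1(\tau_0)=\xi_2(\tau_0)=x_0$. Genuineness together with Proposition \ref{Charact} forces $v_1(\tau_0)=u(\tau_0,x_0)=v_2(\tau_0)$ and that both $(\xi_i,v_i)$ solve \eqref{ODE-C}; ODE uniqueness then gives $(\xi_1,v_1)\equiv(\xi_2,v_2)$ on a neighborhood of $\tau_0$, contradicting isolated intersection. For (iii), at a discontinuity $(t,x)$ Filippov's theorem supplies a forward characteristic $\xi^{(t,x)}$ whose speed is the Rankine-Hugoniot average by Proposition \ref{Charact}; uniqueness follows because any two such curves would be squeezed between extremal backward characteristics from common future points, coinciding by (ii). Persistence of the strict jump for all $\tau \geq t$ is the most delicate step: if the jump closed at some first $\tau_1 > t$, then by (i) the extremal backward characteristics from $(\tau_1,\xi^{(t,x)}(\tau_1))$ are genuine with common initial velocity and hence coincide as a single curve $\eta$, whereas the existence of the shock on $[t,\tau_1)$ with one-sided values $u(t,x\pm)$ forces two distinct genuine characteristics (one on each side of the shock, constructed via (i)) to funnel into $(\tau_1,\xi^{(t,x)}(\tau_1))$, contradicting uniqueness of $\eta$.

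The main obstacle I anticipate is the comparison step in part (i) identifying the Filippov maximal backward characteristic with the ODE solution driven by the one-sided datum $u(t,x+)$. The Oleinik inequality \eqref{O} plays the role of the classical Lax condition for pure Burgers, but the nonlocal Lipschitz source modifies the ODE itself, so one must show the backward separation $\xi_{(t,x+)} - \tilde\xi \geq 0$ collapses via a one-sided differential inequality that couples \eqref{O} with the Lipschitz bound on $h$. Once (i) is established, (ii) becomes a direct uniqueness statement for \eqref{ODE-C}, while the trickier persistence claim in (iii) is obtained by squeezing extremal backward characteristics around the shock curve and invoking (i)-(ii) at the alleged closure time.
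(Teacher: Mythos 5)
Your plan coincides with the paper's own treatment: the authors give no detailed proof of Proposition \ref{gen-char}, asserting it as the adaptation of Dafermos' theory of generalized characteristics \cite{CD1} to \eqref{BP}, enabled by precisely the ingredients you isolate --- the Lipschitz bound on $[G*u(\tau,\cdot)]_x$ (their estimate \eqref{G-lip}), the speed identification in Proposition \ref{Charact} together with the ODE system \eqref{ODE-C}, and the Oleinik/Lax condition \eqref{Lax} --- so your proposal is the intended argument spelled out, and it is sound. Two harmless slips worth fixing: the Poisson identity reads $[G*u]_{xx}=[G*u]+u$, not $[G*u]-u$ (immaterial for the $\mathbf{L}^\infty$ bound on $h_x$); and in the genuineness contradiction for the \emph{maximal} backward characteristic the competitor curve must be launched from the interior shock point with the \emph{right}-trace value $u(\tau_0,\xi(\tau_0)+)$, which, having the smaller speed, lies strictly to the right at earlier times and contradicts maximality --- the left-trace competitor you describe drifts left going backward and instead serves the symmetric argument for $\xi_{(t,x-)}$.
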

Throughout this paper, we shall denote by $\mathcal{J}(t)=\{x\in\R: u(t,x-)>u(t,x+)\}$, the jump set of $u(t,\cdot)$ for any $t>0$. For any $x\in \mathcal{J}(t)$, the base of the backward characteristic cone starting from $(t,x)$ at time $s \in [0,t[$ is
\begin{equation}\label{I}
I_{(t,x)}(s)~:=~]\xi_{(t,x-)}(s),\xi_{(t,x+)}(s)[.
\end{equation}
By the non-crossing property, for any $T>0$ and $z_1<z_2\in \R\setminus \mathcal{J}(T)$, the set
\begin{equation}\label{A}
\mathcal{A}^T_{[z_1,z_2]}~:=~\bigcup_{s\in [0,T]}A_{[z_1,z_2]}^T(s)\qquad\mathrm{with}\qquad A_{[z_1,z_2]}^T(s)~:=~]\xi_{(T,z_1)}(s),\xi_{(T,z_2)}(s)[
\end{equation}
confines all backward characteristics starting from $(T,x)$ with $x\in ]z_1,z_2[$. For any $0<s<\tau\leq T$, we denote by
\begin{equation}\label{Its}
I^{\tau,T}_{[z_1,z_2]}(s)~=~\bigcup_{x\in A_{[z_1,z_2]}^T(\tau)\bigcap \mathcal{J}(\tau)}I_{(\tau,x)}(s).
\end{equation}
Due to the no-crossing property of two genuine backward characteristics and the uniqueness of  forward characteristics in Proposition \ref{gen-char}, the following holds:
\begin{corollary}\label{base-inc}
Given $T>0$ and $z_1<z_2\in \R\setminus \mathcal{J}(T)$, the map $\tau\mapsto I_{[z_1,z_2]}^{\tau,T}(s)$ is increasing in the interval $]s,T]$ in the following sense
\begin{equation}\label{I-inc}
I_{[z_1,z_2]}^{\tau_1,T}(s)~\subseteq~I_{[z_1,z_2]}^{\tau_2,T}(s)\qquad\forall 0 \leq s < \tau_1\leq \tau_2 \leq T.
\end{equation}
Moreover, for any $x\in A^T_{[z_1,z_2]}(\tau_1) \setminus  I^{\tau_2,T}_{[z_1,z_2]}(\tau_1)$ with $0<\tau_1<\tau_2<t$, the unique forward characteristic $\xi^{(\tau_1,x)}$ passing through $(\tau_1,x)$ is genuine in $[\tau_1,\tau_2]$.
\end{corollary}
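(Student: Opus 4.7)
The plan is to derive both assertions from the non-crossing property of genuine characteristics (Proposition \ref{gen-char}(ii)) and the persistence of forward shocks (Proposition \ref{gen-char}(iii)). First, I would observe that for any $x \in A^T_{[z_1,z_2]}(\tau_1) \cap \mathcal{J}(\tau_1)$, the forward shock $\xi^{(\tau_1,x)}$ cannot meet either of the genuine backward characteristics $\xi_{(T,z_1)}$ or $\xi_{(T,z_2)}$ on $[\tau_1,T]$: otherwise the genuine curve would be absorbed by the shock, contradicting that it reaches $(T,z_i)$ with $z_i \notin \mathcal{J}(T)$. Hence $x' := \xi^{(\tau_1,x)}(\tau_2) \in A^T_{[z_1,z_2]}(\tau_2) \cap \mathcal{J}(\tau_2)$.

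For the monotonicity (\ref{I-inc}), it suffices to show $I_{(\tau_1,x)}(s) \subseteq I_{(\tau_2,x')}(s)$ for every such $x$ and take the union. The key observation is that the concatenation of the minimal backward characteristic $\xi_{(\tau_1,x-)}$ on $[0,\tau_1]$ with the forward shock $\xi^{(\tau_1,x)}$ on $[\tau_1,\tau_2]$ is itself a backward characteristic from $(\tau_2,x')$ in the sense of (\ref{d-incl-b}): along the genuine part the speed equals $u$, while along the shock part the Rankine--Hugoniot speed $(u_-+u_+)/2$ from (\ref{PCharac}) lies in $[u(\tau,\xi+),u(\tau,\xi-)]$. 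The minimality of $\xi_{(\tau_2,x'-)}$ then forces $\xi_{(\tau_2,x'-)}(s) \leq \xi_{(\tau_1,x-)}(s)$, and a symmetric argument handles $\xi_{(\tau_2,x'+)}$.

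For the second assertion, suppose toward contradiction that $x \in A^T_{[z_1,z_2]}(\tau_1) \setminus I^{\tau_2,T}_{[z_1,z_2]}(\tau_1)$ while $\xi^{(\tau_1,x)}$ fails to be genuine on $[\tau_1,\tau_2]$. Then there is $\tau^* \in [\tau_1,\tau_2]$ at which $y := \xi^{(\tau_1,x)}(\tau^*) \in \mathcal{J}(\tau^*)$, and $\xi^{(\tau_1,x)}$ coincides with the forward shock through $(\tau^*,y)$ on $[\tau^*,\tau_2]$; applying the previous analysis with $(\tau^*,y)$ in place of $(\tau_1,x)$ gives $y' := \xi^{(\tau_1,x)}(\tau_2) \in A^T_{[z_1,z_2]}(\tau_2) \cap \mathcal{J}(\tau_2)$, so it suffices to establish the strict membership $x \in I_{(\tau_2,y')}(\tau_1)$. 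This strictness is the main obstacle: the Lax inequality (\ref{Lax}) at $\tau_2$ gives $u(\tau_2,y'-) > (u(\tau_2,y'-)+u(\tau_2,y'+))/2$, so the genuine curve $\xi_{(\tau_2,y'-)}$ peels off strictly to the left of the shock on an interval ending at $\tau_2$, whence $\xi_{(\tau_2,y'-)}(\tau^*) < y$. On $[\tau_1,\tau^*]$ both $\xi_{(\tau_2,y'-)}$ and the genuine portion of $\xi^{(\tau_1,x)}$ are genuine characteristics (the latter degenerating to a point when $\tau^* = \tau_1$), so non-crossing (Proposition \ref{gen-char}(ii)) propagates the strict ordering back and yields $\xi_{(\tau_2,y'-)}(\tau_1) < x$. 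A symmetric bound on $\xi_{(\tau_2,y'+)}(\tau_1)$ produces the required strict membership, contradicting $x \notin I^{\tau_2,T}_{[z_1,z_2]}(\tau_1)$.
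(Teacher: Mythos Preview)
Your argument is correct and follows the same route as the paper's proof: push the jump at $(\tau_1,x)$ forward along the unique shock to a jump at $(\tau_2,x')$, then compare the two characteristic cones using non-crossing, and derive the second assertion by contradiction. The paper is considerably more terse---it simply asserts that ``the non-crossing property implies'' $I_{(\tau_1,x)}(s)\subseteq I_{(\tau_2,\chi(\tau_2))}(s)$ and then states that the second claim ``follows directly''---whereas you supply the missing mechanics. Your concatenation device (gluing $\xi_{(\tau_1,x-)}$ to the forward shock to manufacture a backward characteristic from $(\tau_2,x')$, then invoking minimality of $\xi_{(\tau_2,x'-)}$) is exactly what justifies the paper's one-line appeal to non-crossing, and your careful treatment of the strict inequality $\xi_{(\tau_2,y'-)}(\tau_1)<x$ addresses the only genuinely delicate point, namely that $I_{(\tau_2,y')}(\tau_1)$ is an \emph{open} interval. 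One small simplification: rather than the ``peeling off'' computation at $\tau_2$, you can get $\xi_{(\tau_2,y'-)}(\tau^*)<y$ in one stroke from Proposition~\ref{Charact}, since a genuine characteristic satisfies $u(\tau,\xi(\tau)-)=u(\tau,\xi(\tau)+)$ on the open interval and therefore cannot pass through the jump point $(\tau^*,y)$; strictness at $\tau_1$ then follows from uniqueness of the forward characteristic through $(\tau_1,x)$ (which would otherwise force $\xi_{(\tau_2,y'-)}$ and $\xi^{(\tau_1,x)}$ to coincide on $[\tau_1,\tau_2]$).
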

\begin{proof} Let $x \in \J(\tau_1) \cap  A^T_{[z_1,z_2]}(\tau_1) $ and let $\chi(\cdot)$ be the unique forward characteristic emenating from $(\tau_1,x)$.
By property (iii) of Proposition \ref{gen-char}, for a fixed $\tau_2 \in [\tau_1,T]$ we have that $\chi(\tau_2) \in \J(\tau_2)$ and by the non-crossing property, $\chi(\tau_2) \in A^T_{[z_1,z_2]}(\tau_2)$.
Since the backward characteristics that form the base of a characteristic cone are genuine, the non-crossing property implies that
\[
  I_{(\tau_1,x)}(s) \subseteq I_{(\tau_2,\chi(\tau_2))}(s) \subset A^T_{[z_1,z_2]}(s) \qquad \forall s \in [0,\tau_1]
\] yielding \eqref{I-inc}. The later statement follows directly.
\end{proof}

\section{SBV-regularity} Throughout this section, let  $u:[0,\infty[\times\R\to\R$ be the unique locally $BV$-weak entropy solution of (\ref{BP}) for some initial data $u_0\in{\bf L}^1(\R)$. The section aims to  prove Theorem \ref{SBV}. For simplicity, denote the jump and Cantor parts of $Du(t,\cdot)$ by
\[
  \nu_t~=~D^ju(t,\cdot)\qquad\mathrm{and}\qquad \mu_t~=~D^cu(t,\cdot) \qquad \mbox{for any}~~ t \in ]0,+\infty[\,
\]
which, by \eqref{O}, are both non-positive. We will show that $\mu_t(\R)<0$  for at most countable positive times $t>0$. In order to do so,  let us first establish some basic bounds on backward characteristics.

\begin{lemma}\label{bsc} For any given $0<t_0<t$ and $x_1\leq x_2$, let $\xi_i(\cdot)$ be a genuine backward characteristic starting from $(t,x_i)$ and
\[
v_i(s)~=~u(s,\xi_i(s))\quad\forall s\in [0,t],~ i\in \{1,2\}.
\]
Then the followings hold:
\begin{align}\label{eet1}
|v_2(s)-v_1(s)|+|\xi_2(s)-\xi_1(s)|~\leq~c_t(s) \cdot\p{|v_2(t)-v_1(t)|+|\xi_2(t)-\xi_1(t)|}
\end{align}
for all $s\in [0,t]$ and
\begin{equation}\label{eet2}
\xi_2(t_0)-\xi_1(t_0)~\geq~ \frac{x_2-x_1+(v_1(t_0)-v_2(t_0))\cdot (t-t_0)}{ \Gamma_{[t_0,t]}}
\end{equation}
with
\begin{equation}\label{cs}
\begin{cases}
c_t(s)&=~ \exp \set{ 2\cdot\left(\sqrt{ 2 K_te^t \norm{u_0}_{{\bf L}^1(\R)}} +(e^t\norm{u_0}_{{\bf L}^1(\R)}+1)\cdot \sqrt{t}\right)\cdot (\sqrt{t}-\sqrt{s})}  ,\cr\cr
\Gamma_{[t_0,t]}&=~1+\left(\sqrt{\dfrac{2K_t e^t}{t_0} \norm{u_0}_{{\bf L}^1(\R)}}+e^t\norm{u_0}_{{\bf L}^1(\R)}\right)\ds\cdot \dfrac{e^{K_t}t }{ t_0}\cdot (t-t_0)^2.
\end{cases}
\end{equation}
\end{lemma}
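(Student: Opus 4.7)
The plan is to exploit the ODE system \eqref{ODE-C} that genuine characteristics satisfy, combined with the fact that the nonlocal source $[G*u(s,\cdot)]_x$ is globally Lipschitz on $\R$. Indeed, differentiating \eqref{P} once more shows that $[G*u]_{xx}=[G*u]+u$, so $[G*u(s,\cdot)]_x$ is Lipschitz with constant at most
\[
L(s)~:=~\tfrac12\|u(s,\cdot)\|_{\L^1(\R)}+\|u(s,\cdot)\|_{\L^\infty(\R)}\,,
\]
which, by \eqref{L1bound}--\eqref{LinftyB}, is bounded on $(0,t]$ by $\tfrac12 e^t\|u_0\|_{\L^1(\R)}+\sqrt{2K_te^t\|u_0\|_{\L^1(\R)}/s}$. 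This single Lipschitz bound, together with $\dot\xi_i=v_i$, drives both estimates.

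For \eqref{eet1}, I would set $F(s)=|\xi_2(s)-\xi_1(s)|+|v_2(s)-v_1(s)|$. From \eqref{ODE-C} and the Lipschitz estimate, $|F'(s)|\le(1+L(s))F(s)$ a.e., and Gronwall applied backward from $t$ gives $F(s)\le F(t)\exp\!\int_s^t(1+L(\sigma))\,d\sigma$. The integrable singularity $\int_s^t\sigma^{-1/2}d\sigma=2(\sqrt t-\sqrt s)$ handles the $\L^\infty$ contribution, while the remaining linear piece $(1+\tfrac12 e^t\|u_0\|_{\L^1(\R)})(t-s)$ is absorbed using $(t-s)=(\sqrt t-\sqrt s)(\sqrt t+\sqrt s)\le 2\sqrt t(\sqrt t-\sqrt s)$. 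Collecting terms reproduces the exponent of $c_t(s)$ in \eqref{cs}.

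For \eqref{eet2}, apply Fubini twice to the identity $x_2-x_1=\xi_2(t)-\xi_1(t)$:
\[
x_2-x_1 \;=\; (\xi_2(t_0)-\xi_1(t_0))+(v_2(t_0)-v_1(t_0))(t-t_0)+\int_{t_0}^t\!\!\int_{t_0}^\sigma(\dot v_2(\tau)-\dot v_1(\tau))\,d\tau\,d\sigma.
\]
Rearranging isolates $x_2-x_1+(v_1(t_0)-v_2(t_0))(t-t_0)$ on the left and leaves $\xi_2(t_0)-\xi_1(t_0)$ plus a double integral majorized by $L(\tau)\,|\xi_2(\tau)-\xi_1(\tau)|$. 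To control $\xi_2(\tau)-\xi_1(\tau)$ for $\tau\in[t_0,t]$, I would combine the non-crossing property in Proposition~\ref{gen-char} with the one-sided Oleinik inequality \eqref{O}: since $\xi_2(\tau)\ge\xi_1(\tau)$, $v_2(\tau)-v_1(\tau)\le\tfrac{K_\tau}{\tau}(\xi_2(\tau)-\xi_1(\tau))\le\tfrac{K_t}{t_0}(\xi_2(\tau)-\xi_1(\tau))$. Because $\dot{(\xi_2-\xi_1)}=v_2-v_1$, a second Gronwall bounds the separation by $(\xi_2(t_0)-\xi_1(t_0))\,e^{K_t(\tau-t_0)/t_0}$. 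Substituting back, pulling out the supremum of $L$ on $[t_0,t]$, and evaluating the remaining $d\tau\,d\sigma$ integral produces a factor of the form $\Gamma_{[t_0,t]}\cdot(\xi_2(t_0)-\xi_1(t_0))$ on the right; dividing yields \eqref{eet2}.

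The conceptual steps---Lipschitz regularity of the nonlocal source, a Gronwall driven by this ambient Lipschitz constant, and a second Gronwall driven by the Oleinik slope applied along non-crossing genuine characteristics---are standard, so the main obstacle is purely algebraic bookkeeping: matching the stated closed forms of $c_t(s)$ and $\Gamma_{[t_0,t]}$ in \eqref{cs} requires carefully replacing the tighter but awkward majorants produced by the Gronwall computations (e.g.\ $e^{K_t(t-t_0)/t_0}$) by the cleaner factors (e.g.\ $e^{K_t}t/t_0$) displayed in the lemma, via elementary inequalities between $t$, $t_0$, and $t-t_0$.
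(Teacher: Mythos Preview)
Your approach coincides with the paper's: the same Lipschitz bound $\tfrac12\|u(s,\cdot)\|_{\L^1}+\|u(s,\cdot)\|_{\L^\infty}$ for $[G*u(s,\cdot)]_x$ (obtained there by splitting the kernel integral into three pieces rather than via \eqref{P}, but the constant is identical), the same backward Gr\"onwall on $|\xi_2-\xi_1|+|v_2-v_1|$ for \eqref{eet1}, and for \eqref{eet2} the same scheme of an Oleinik-driven forward Gr\"onwall on $\xi_2-\xi_1$ followed by two integrations of \eqref{ODE-C}.

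The one substantive departure is in the Oleinik step. The paper keeps the time-dependent rate $K_t/s$, so Gr\"onwall produces the power law $(s/t_0)^{K_t}$, which is then replaced by $e^{K_t}s/t_0$ to obtain \eqref{O-char}. You instead freeze the rate at $K_t/t_0$, which gives $e^{K_t(\tau-t_0)/t_0}$; but this factor is \emph{not} dominated by $e^{K_t}t/t_0$ once $t>2t_0$ (the left side grows like $e^{K_t(t/t_0-1)}$), so the ``elementary inequality between $t$, $t_0$, and $t-t_0$'' you appeal to does not exist. Use the paper's time-dependent rate $K_t/s$ in that Gr\"onwall and the stated $\Gamma_{[t_0,t]}$ follows without further massaging.
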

\begin{proof} {\bf 1.} Let's first proof (\ref{eet1}). From Proposition \ref{Charact}, it holds that
\begin{equation}\label{char1}
\begin{cases}
\dot{\xi}_i(s)&=~v_i(s)\\[3mm]
\dot{v}_i(s)&=~[G*u(s,\cdot)]_x(\xi_i(s))
\end{cases}
\quad\forall s\in ]0,t[.
\end{equation}
In particular, this implies that
\[
\frac{d}{ds}~\big|\xi_2(s)-\xi_1(s)\big|~\geq~-\left|v_2(s)-v_1(s)\right|
\]
and
\[
\frac{d}{ds}~\big|v_2(s)-v_1(s)\big|~\geq~-\Big|[G*u(s,\cdot)]_x(\xi_2(s))-[G*u(s,\cdot)]_x(\xi_1(s))\Big|.
\]
Since $\xi_2(s)\geq \xi_1(s)$ for all $s\in ]0,t]$,  we estimate
\begin{align*}
 &\Big|[G*u(s,\cdot)]_x(\xi_2(s))-[G*u(s,\cdot)]_x(\xi_1(s))\Big|
~\leq~\frac{1}{2}\cdot \int_{-\infty}^{\xi_1(s)} \left| u(s,z)\right| \cdot \left| e^{z - \xi_2(s)} - e^{z-\xi_1(s)}\right| ~dz \\
 &+ \frac{1}{2}\cdot \int_{\xi_1(s)}^{\xi_2(s)}\left|  u(s,z)\right| \cdot \left| e^{z - \xi_2(s)} + e^{\xi_1(s) -z}\right| ~dz
+ \frac{1}{2} \cdot\int_{\xi_2(s)}^{+\infty}\left|u(s,z)\right| \cdot \left| e^{\xi_1(s)- z}-
e^{\xi_2(s) -z}\right| ~dz\\
&\qquad~\leq~\frac{1}{2} \cdot \left(1-e^{\xi_1(s)-\xi_2(s)}\right)\int_{\R\setminus [\xi_1(s),\xi_2(s)] } |u(s,z)|~dz+\int_{ [\xi_1(s),\xi_2(s)]} |u(s,z)|~dz\\
&\qquad~\leq~\left(\frac{1}{ 2}\cdot \|u(s,\cdot)\|_{{\bf L}^{1}(\R)}+\|u(s,\cdot)\|_{{\bf L}^{\infty}(\R)}\right)\cdot \big|\xi_2(s)-\xi_1(s)\big| \,.
\end{align*}
Hence,  (\ref{L1bound}) and (\ref{LinftyB}) imply that
\begin{multline}\label{G-lip}
\Big|[G*u(s,\cdot)]_x(\xi_2(s))-[G*u(s,\cdot)]_x(\xi_1(s))\Big|\\~\leq~\left(\sqrt{\frac{2K_te^t}{s} \norm{u_0}_{{\bf L}^1(\R)}}+e^t\norm{u_0}_{{\bf L}^1(\R)}\right)\cdot \big|\xi_2(s)-\xi_1(s)\big|.
\end{multline}
Setting $M_t=\sqrt{2 K_te^t \norm{u_0}_{{\bf L}^1(\R)}}+(e^t\norm{u_0}_{{\bf L}^1(\R)}+1)\cdot \sqrt{t}$, we have
\[
\frac{d}{ds}\big(\big|\xi_2(s)-\xi_1(s)\big|+\big|v_2(s)-v_1(s)\big|\big)~\geq~-\frac{M_t}{\sqrt{s}}\cdot \big(\big|\xi_2(s)-\xi_1(s)\big|+\big|v_2(s)-v_1(s)\big|\big),
\]
for all $s \in ]0,t]$, and Gr\"onwall's inequality yields (\ref{eet1}).
\v

{\bf 2.} To prove (\ref{eet2}), we first apply (\ref{O}) to \eqref{char1} to get
\[
\dot{\xi}_2(s)-\dot{\xi}_1(s)~=~u(s,\xi_2(s))-u(s,\xi_1(s))~\leq~\frac{K_{t}}{s}\cdot (\xi_2(s)-\xi_1(s)),
\]
and this implies
\begin{equation}\label{O-char}
\xi_2(s)-\xi_1(s)~\leq~\frac{e^{K_t}s}{t_0}\cdot (\xi_2(t_0)-\xi_1(t_0))~\leq~\frac{e^{K_t}t}{t_0}\cdot (\xi_2(t_0)-\xi_1(t_0))\quad\forall s\in [t_0,t].
\end{equation}
Therefore, from (\ref{char1}) and \eqref{G-lip}, it holds for $s\in [t_0,t]$ that
\begin{align*}
v_2(s)-v_1(s)~=&~v_2(t_0)-v_1(t_0)+\int_{t_0}^s[G*u(\tau,\cdot)]_x(\xi_2(\tau))-[G*u(\tau,\cdot)]_x(\xi_1(\tau))~d\tau  \\
~\leq&~v_2(t_0)-v_1(t_0)+\int_{t_0}^{s}\left(\sqrt{\frac{2K_te^t}{t_0} \norm{u_0}_{{\bf L}^1(\R)}}+e^t\norm{u_0}_{{\bf L}^1(\R)}\right)\cdot \big(\xi_2(\tau)-\xi_1(\tau)\big)~d\tau \\
~\leq&~v_2(t_0)-v_1(t_0)+\gamma_{[t_0,t]}\cdot (\xi_2(t_0)-\xi_1(t_0))
\end{align*}
with
\[
\gamma_{[t_0,t]}~=~\left(\sqrt{\frac{2K_t e^t}{t_0} \norm{u_0}_{{\bf L}^1(\R)}}+e^t\norm{u_0}_{{\bf L}^1(\R)}\right)\cdot\frac{e^{K_t}t}{t_0}\cdot (t-t_0).
\]
Integrating the first equation in \eqref{char1} over $[t_0,t]$,  we get
\begin{align*}
 \xi_2(t)-\xi_1(t)~=&~ \xi_2(t_0)-\xi_1(t_0)+\int_{t_0}^{t}v_2(\tau)-v_1(\tau)~d\tau\\
 ~\leq&~\left(v_2(t_0)-v_1(t_0)\right)\cdot (t-t_0)+\left(1+\gamma_{[t_0,t]}\cdot (t-t_0)\right)\cdot\left(\xi_2(t_0)-\xi_1(t_0)\right)
 \end{align*}
 and this yields (\ref{eet2}).
\end{proof}
\v

As a consequence, we obtain the following two corollaries. The first one provides an upper bound on the base of characteristic cone $C_{(t,x)}$ at time $s\in ]0,t[$ for every $x\in \mathcal{J}(t)$.
\medskip

\begin{corollary}\label{base-bound} For any $(t,x)\in ]0,+\infty[\times\mathcal{J}(t)$, it holds that
\begin{equation}\label{Low-length}
\left|I_{(t,x)}(s)\right|~\leq~-c_t(s)\cdot \nu_t(\set{x})\quad\forall s\in [0,t[.
\end{equation}
\end{corollary}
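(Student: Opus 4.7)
The plan is a direct application of Lemma \ref{bsc} to the pair of minimal and maximal backward characteristics emanating from the jump point $(t,x)$. Write $\xi_1 = \xi_{(t,x-)}$ and $\xi_2 = \xi_{(t,x+)}$, so that by definition $I_{(t,x)}(s) = \,]\xi_1(s),\xi_2(s)[$ and $|I_{(t,x)}(s)| = \xi_2(s)-\xi_1(s)$. By Proposition \ref{gen-char}(i) both $\xi_1$ and $\xi_2$ are genuine on $]0,t[$, so they fall into the hypotheses of Lemma \ref{bsc}, with associated speed functions $v_i(s) = u(s,\xi_i(s))$.

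Next, I would identify the terminal data at time $t$. Since $\xi_1(t) = \xi_2(t) = x$, we have $|\xi_2(t)-\xi_1(t)| = 0$. Because the maximal/minimal backward characteristics are genuine and approach $x$ from opposite sides, passing to the limit $s \to t^-$ in Proposition \ref{Charact} gives $v_1(t) = u(t,x-)$ and $v_2(t) = u(t,x+)$. Therefore
\[
|v_2(t)-v_1(t)| \;=\; u(t,x-)-u(t,x+) \;=\; -\nu_t(\{x\}),
\]
where the sign comes from (\ref{Lax}) and the definition of the jump part of $Du(t,\cdot)$.

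Substituting these two terminal quantities into (\ref{eet1}) yields
\[
\xi_2(s)-\xi_1(s) \;\leq\; |v_2(s)-v_1(s)|+|\xi_2(s)-\xi_1(s)| \;\leq\; c_t(s)\cdot\bigl(-\nu_t(\{x\})\bigr)
\]
for every $s\in[0,t[$, which is exactly (\ref{Low-length}). There is no real obstacle here; the only point to be careful about is justifying $v_i(t) = u(t,x\pm)$ for the genuine backward characteristics, which is immediate from Proposition \ref{gen-char}(i) and the fact that $\xi_{(t,x\pm)}$ are by construction the extreme characteristics emanating from $(t,x)$. Everything else is a one-line invocation of the already-established Grönwall estimate in Lemma \ref{bsc}.
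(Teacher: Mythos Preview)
Your proof is correct and follows exactly the same approach as the paper: apply inequality (\ref{eet1}) of Lemma~\ref{bsc} to the minimal and maximal backward characteristics $\xi_{(t,x\pm)}$, using that $\xi_{(t,x-)}(t)=\xi_{(t,x+)}(t)=x$ and that the terminal speeds are $u(t,x\mp)$, so the right-hand side reduces to $c_t(s)\cdot|u(t,x+)-u(t,x-)|=-c_t(s)\cdot\nu_t(\{x\})$. Your write-up is in fact slightly more explicit than the paper's about why $v_i(t)=u(t,x\pm)$, but the argument is identical.
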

\begin{proof} Since $x\in \mathcal{J}(t)$, the inequality (\ref{Lax}) implies that
\[
\nu_t\{x\}~=~u(t,x+)-u(t,x-)~<~0.
\]
Thus, recalling (\ref{eet1}), we obtain
\begin{eqnarray*}
\big|\xi_{(t,x+)}(s)-\xi_{(t,x-)}(s)\big|&\leq&c_t(s)\cdot \left|u(t,x+)-u(t,x-)\right|
\end{eqnarray*}
and this yields (\ref{Low-length}).
\end{proof}
\v
\medskip

In the next corollary, we show that two distinct characteristics are separated for all positive time; moreover, the distance between them is proportional to the difference in the values of the solution along the characteristics.
\medskip

\begin{corollary}\label{rarefaction} Given $x_1 < x_2$ and $\sigma$ and $t$, such that $0 < \sigma < t \leq T$, let $\xi_i(\cdot)$ be a genuine backward characteristic starting from $(t,x_i)$ and
\[
  v_i(s)~=~u(s,\xi_i(s))\quad\forall s\in [0,t[,~ i\in \{1,2\}.
\]
Then it holds that
\begin{equation}\label{eet3}
  \xi_2(\sigma/2) - \xi_1(\sigma/2) ~\geq~ \kappa_{[\sigma,T]} \cdot \left( v_1(t) - v_2(t) \right) 
\end{equation}
where
\[
  \kappa_{[\sigma,T]} ~=~\dfrac{\sigma}{2}\left[ \Gamma_{[\sigma/2,T]} +\left(\sqrt{\frac{4 K_T e^T}{\sigma} \norm{u_0}_{{\bf L}^1(\R)}}+e^T\norm{u_0}_{{\bf L}^1(\R)}\right)\cdot e^{K_T}T\cdot (T-\sigma/2)  \right]^{-1} \,.
\]
\end{corollary}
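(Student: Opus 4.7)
The plan is to apply Lemma \ref{bsc} on the sub-interval $[\sigma/2,\sigma]$ (rather than on the full $[\sigma/2,t]$) and then propagate the resulting estimate forward to time $t$ using the ODE system (\ref{char1}) for $v_i$. The point of terminating at $\sigma$ is that $\sigma-\sigma/2=\sigma/2$, which is precisely the factor appearing in the numerator of $\kappa_{[\sigma,T]}$; using $t$ instead would yield an awkward $t-\sigma/2$ that cannot be absorbed into the constants.

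Since $\xi_1,\xi_2$ are genuine on $[0,t]$, their restrictions to $[0,\sigma]$ are genuine backward characteristics from $(\sigma,\xi_i(\sigma))$, so Lemma \ref{bsc} applies with $t_0=\sigma/2$ and the lemma's upper time equal to $\sigma$. Dropping the non-negative term $\xi_2(\sigma)-\xi_1(\sigma)$ from the numerator of (\ref{eet2}), and using the monotonicity $\Gamma_{[\sigma/2,\sigma]} \leq \Gamma_{[\sigma/2,T]}$ that is evident from (\ref{cs}), one obtains
\[
\bigl(\xi_2(\sigma/2)-\xi_1(\sigma/2)\bigr)\cdot\Gamma_{[\sigma/2,T]} ~\geq~ \frac{\sigma}{2}\bigl(v_1(\sigma/2)-v_2(\sigma/2)\bigr),
\]
a bound valid regardless of the sign of the right-hand side.

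Next I would relate $v_i(\sigma/2)$ to $v_i(t)$ by integrating (\ref{char1}) from $\sigma/2$ to $t$, applying the Lipschitz bound (\ref{G-lip}) on $[G*u(s,\cdot)]_x$, and controlling $|\xi_2(s)-\xi_1(s)|$ via (\ref{O-char}). Uniformizing the coefficients over $[\sigma/2,T]$ (via $K_s\leq K_T$, $1/\sqrt{s}\leq\sqrt{2/\sigma}$, $e^{K_s}s\leq e^{K_T}T$, and $t-\sigma/2\leq T-\sigma/2$) yields
\[
\bigl|(v_2(t)-v_1(t)) - (v_2(\sigma/2)-v_1(\sigma/2))\bigr| ~\leq~ \frac{2\beta}{\sigma}\cdot\bigl(\xi_2(\sigma/2)-\xi_1(\sigma/2)\bigr),
\]
where $\beta:=\bigl(\sqrt{4K_Te^T\|u_0\|_{{\bf L}^1(\R)}/\sigma}+e^T\|u_0\|_{{\bf L}^1(\R)}\bigr)\,e^{K_T}T\,(T-\sigma/2)$ is exactly the summand added to $\Gamma_{[\sigma/2,T]}$ in the denominator of $\kappa_{[\sigma,T]}$.

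Writing $\Delta:=\xi_2(\sigma/2)-\xi_1(\sigma/2)$, the second display gives $v_1(\sigma/2)-v_2(\sigma/2)\geq v_1(t)-v_2(t)-\tfrac{2\beta}{\sigma}\Delta$; multiplying by $\sigma/2>0$ and chaining with the first display yields $\Delta\,(\Gamma_{[\sigma/2,T]}+\beta)\geq\tfrac{\sigma}{2}(v_1(t)-v_2(t))$, which is exactly (\ref{eet3}). The only substantive decision is the choice of the interval $[\sigma/2,\sigma]$ for applying (\ref{eet2}); all remaining steps are bookkeeping to line up the constants with those of $\kappa_{[\sigma,T]}$, so I do not anticipate any real obstacle.
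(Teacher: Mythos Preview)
Your proof is correct and follows essentially the same two-step structure as the paper's: bound $v_1(\sigma/2)-v_2(\sigma/2)$ in terms of $\xi_2(\sigma/2)-\xi_1(\sigma/2)$ via (\ref{eet2}), then propagate to time $t$ by integrating (\ref{char1}) and using (\ref{G-lip}) together with (\ref{O-char}). The one difference is that the paper applies (\ref{eet2}) on $[\sigma/2,t]$ rather than on $[\sigma/2,\sigma]$ and then absorbs the resulting factor via $t-\sigma/2>\sigma/2$, so your remark that $t-\sigma/2$ ``cannot be absorbed into the constants'' is not accurate; nevertheless your variant is equally valid and arrives at the identical intermediate inequality.
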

\begin{proof}
Integrating the second equation in \eqref{ODE-C} over $[\sigma/2, t]$ yields
\begin{align*}
v_1(t)-v_2(t) ~=&~ v_1(\sigma/2)-v_2(\sigma/2)
+ \int_{\sigma/2}^t[G*u(\tau,\cdot)]_x(\xi_1(\tau))-[G*u(\tau,\cdot)]_x(\xi_2(\tau))~d\tau  \\
~\leq&~ v_1(\sigma/2)-v_2(\sigma/2)
+ \int_{\sigma/2}^t \Big| [G*u(\tau,\cdot)]_x(\xi_2(\tau))-[G*u(\tau,\cdot)]_x(\xi_1(\tau)) \Big| ~d\tau
\end{align*}
and by \eqref{G-lip} and \eqref{O-char} it holds that
\begin{equation}\label{est-v}
\begin{aligned}[c]
v_1(t)&-v_2(t) ~\leq~ v_1(\sigma/2)-v_2(\sigma/2) \\
&+ \left(\sqrt{\frac{4K_T e^T}{\sigma} \norm{u_0}_{{\bf L}^1(\R)}}+e^T\norm{u_0}_{{\bf L}^1(\R)}\right)\cdot\frac{2e^{K_T}T}{\sigma}\cdot (T-\sigma/2) \cdot (\xi_2(\sigma/2) - \xi_1(\sigma/2) ) \,.
\end{aligned}
\end{equation}
On the other hand, by \eqref{eet2} we have that
\[
  v_1(\sigma/2) - v_2(\sigma/2) ~\leq~ \dfrac{\Gamma_{[\sigma/2,t]}}{t-\sigma/2}\cdot (\xi_2(\sigma/2)- \xi_1(\sigma/2)) ~\leq~ \dfrac{2\Gamma_{[\sigma/2,T]}}{\sigma}\cdot (\xi_2(\sigma/2)- \xi_1(\sigma/2)).
\]
which, when applied to \eqref{est-v}, implies \eqref{eet3}.
\end{proof}
\v

 The next lemma shows that, for a certain positive time $s$, if $u(s,\cdot)$ is not in $SBV$, then at future times $s+ \ve$ the Cantor part of $u(s, \cdot)$ gets transformed into jump singularities. Following the main idea in \cite{AC,R}, for any $s \in ]0,T[$ and $z_1 < z_2 \in \R \setminus \J(T)$, let us consider the set of points $E^T_{[z_1,z_2]}(s)$ in $A^T_{[z_1,z_2]}(s)$ where the Cantor part of $D_xu(s,\cdot)$ prevails, i.e.,
\begin{equation}\label{E}
 E^T_{[z_1,z_2]}(s)~=~\left\{x\in A^T_{[z_1,z_2]}(s):\lim_{\eta\to 0+}\frac{\eta+|D_xu(s,\cdot)-\mu_{s}|([x-\eta,x+\eta])}{ -\mu_{s}([x-\eta,x+\eta])}~=~0\right\}.
\end{equation}
Besicovitch differentiation theorem \cite{AFP} gives that $\mu_{s}\left(A^T_{[z_1,z_2]}(s)\setminus E^T_{[z_1,z_2]}(s)\right)=0$ and
\begin{equation}\label{c1}
 \lim_{\eta \to 0^+} \frac{u^-(s,x-\eta) - u^+(s, x+\eta)}{-\mu_{s}([x-\eta,x+\eta])} ~=~ 1\qquad\forall x\in E^T_{[z_1,z_2]}(s).
\end{equation}
Moreover, for $\mu_{s}$-a.e. $x$ in $E^T_{[z_1,z_2]}(s)$, it holds that
\begin{equation}\label{cantor}
\lim_{\eta\to 0} \frac{u(s,x+\eta)-u(s,x)}{\eta}~=~-\infty.
\end{equation}

\begin{lemma}\label{cantor-jump} Let $0<s<t\leq T$ and $z_1 < z_2 \in \R \setminus \J(T)$ be fixed. Then, it holds for $\mu_{s}$-a.e.  $x\in A^T_{[z_1,z_2]}(s)$ that
\[
      ]x-\eta_x, x+\eta_x [  ~\subset~ I^{t,T}_{[z_1,z_2]}(s)\qquad\mathrm{for~some~}\eta_x>0.
\]
\end{lemma}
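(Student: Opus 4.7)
My strategy is a contradiction argument. At a point $x$ where the Cantor part of $Du(s,\cdot)$ prevails, \eqref{cantor} forces $u(s,\cdot)$ to drop faster than any linear rate near $x$; on the other hand, a rarefaction-type estimate in the spirit of Corollary~\ref{rarefaction} says that a sharp drop of $u$ along two genuine characteristics forces their spatial spread at time $s$ to grow comparably. These two facts are incompatible for points arbitrarily close to $x$, unless a whole neighborhood of $x$ has already been absorbed into some backward characteristic cone issuing from a jump at time $t$.

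I would first observe that the set $I^{t,T}_{[z_1,z_2]}(s)$ is open in $\R$ and that every $y\in \J(s)\cap A^T_{[z_1,z_2]}(s)$ automatically belongs to it: by Proposition~\ref{gen-char}(iii) the unique forward characteristic from $(s,y)$ reaches a point $\tilde y\in\J(t)$, and the non-crossing property places $y$ inside the open interval $I_{(t,\tilde y)}(s)\subset I^{t,T}_{[z_1,z_2]}(s)$. Consequently the complement $\Omega := A^T_{[z_1,z_2]}(s)\setminus I^{t,T}_{[z_1,z_2]}(s)$ is closed and consists entirely of continuity points of $u(s,\cdot)$.

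Now fix a Cantor point $x\in E^T_{[z_1,z_2]}(s)$ satisfying \eqref{cantor} (i.e.\ $\mu_s$-a.e.\ such $x$) and assume the conclusion fails. Then $x\in\Omega$ by closedness, and by symmetry I may assume there is a sequence $y_n\searrow x$ with $y_n\in\Omega$. Corollary~\ref{base-inc} ensures that the unique forward characteristics $\chi^0=\xi^{(s,x)}$ and $\chi^n=\xi^{(s,y_n)}$ are genuine on $[s,t]$, and Proposition~\ref{gen-char}(ii) gives $\chi^0\leq\chi^n$ on this interval. Setting $w_i(\tau):=u(\tau,\chi^i(\tau))$, the heart of the proof is the pair of $n$-uniform bounds
\[
\text{(a)}~~w_0(t)-w_n(t)~\geq~w_0(s)-w_n(s)-C_{s,t,T}(y_n-x), \qquad \text{(b)}~~y_n-x~\geq~\tilde\kappa_{s,t,T}\cdot\bigl(w_0(t)-w_n(t)\bigr).
\]
Bound (a) is obtained by integrating \eqref{char1} over $[s,t]$, applying the Lipschitz estimate \eqref{G-lip}, and controlling $|\chi^n-\chi^0|$ on $[s,t]$ by a multiple of $y_n-x$ via the Oleinik-type spread \eqref{O-char} with $t_0=s$. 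Bound (b) is a transparent adaptation of Corollary~\ref{rarefaction} with reference time $t_0=s$ in place of $\sigma/2$: combining \eqref{eet2} applied to the backward restrictions $\chi^i|_{[s,t]}$ with the same integrated ODE argument produces the desired constant $\tilde\kappa_{s,t,T}>0$.

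To conclude, continuity of $u(s,\cdot)$ at $x$ and at each $y_n$ together with \eqref{cantor} gives, for any prescribed $M>0$ and all $n$ sufficiently large, $w_0(s)-w_n(s)=u(s,x)-u(s,y_n)>M(y_n-x)$. Chaining (a) and (b) then yields
\[
  y_n-x~\geq~\tilde\kappa_{s,t,T}\bigl(M-C_{s,t,T}\bigr)(y_n-x),
\]
which for $M>C_{s,t,T}+\tilde\kappa_{s,t,T}^{-1}$ is absurd. The main obstacle is establishing bound (b) at the precise reference time $s$, since Corollary~\ref{rarefaction} is stated at $\sigma/2$; however this is a direct modification of the ODE argument used in the proof of Lemma~\ref{bsc}, and all relevant constants remain finite thanks to $s>0$. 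Everything else is routine bookkeeping.
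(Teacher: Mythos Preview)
Your proposal is correct and follows essentially the same approach as the paper: a contradiction argument in which the rarefaction-type estimate \eqref{eet2} (packaged by you as bounds (a) and (b), and by the paper as a single direct application of \eqref{eet2}) shows that the drop of $u(s,\cdot)$ across two nearby genuine forward characteristics is at most linear in their separation, contradicting \eqref{cantor}. The only cosmetic differences are that you unify the paper's two cases via a sequence $y_n\in\Omega$ (which tacitly uses that $x$ cannot be isolated in $\Omega$ by non-crossing at time $s>0$), and that your detour through $w_i(t)$ is unnecessary since \eqref{eet2} already yields $u(s,x)-u(s,y_n)\le \Gamma_{[s,t]}(t-s)^{-1}(y_n-x)$ directly.
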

\begin{proof}
Since $I^{t,T}_{[z_1,z_2]}(s)$ is open, it is sufficient to prove that every point $x \in E^T_{[z_1,z_2]}(s)\setminus \J(s)$ satisfying (\ref{cantor}) is in  $I^{t,T}_{[z_1,z_2]}(s)$.  Assume by a contradiction that
\[
x~\in~A^T_{[z_1,z_2]}(s)\setminus \overline{ I^{t,T}_{[z_1,z_2]}(s)}\bigcup  \partial (\overline{ I^{t,T}_{[z_1,z_2]}(s) }).
\]
{\bf 1.} If $x\in A^T_{[z_1,z_2]}(s)\setminus \overline{ I^{t,T}_{[z_1,z_2]}(s)}$ then
    \begin{equation}\label{cdc}
      ]x-\eta_0, x + \eta_0[~ \bigcap ~\overline{ I^{t,T}_{[z_1,z_2]}(s) }~=~ \emptyset \qquad\mathrm{for~some}~\eta_0>0.
    \end{equation}
Given any $\eta\in [0,\eta_0[$, let  $\xi^{\eta}_1(\cdot)$ and $\xi^{\eta}_2(\cdot)$ be the unique forward characteristics emanating from $x-\eta$ and $x+\eta$ at time $\tau_0$.
From Corollary \ref{base-inc}, both $\xi_1^{\eta}(\cdot)$ and $\xi_2^{\eta}(\cdot)$ are genuine in $[t_0,t]$ and
\begin{equation}\label{nocr}
\xi_2^{\eta}(\tau)-\xi_1^{\eta}(\tau)~\geq~0\qquad\forall \tau \in [s,t]
\end{equation}
Thus, (\ref{eet2}) in Lemma \ref{bsc} implies
\begin{align*}
2\eta~=&~\xi_2^{\eta}(s)-\xi_1^{\eta}(s)~\geq~
\frac{\xi_2^{\eta}(t)-\xi_1^{\eta}(t)+\left(u(s,x-\eta)-u(s,x+\eta)\right)\cdot (t-s)}{ \Gamma_{[s,t]}}\\
~\geq&~-\frac{ \left(u(s,x+\eta)-u(s,x-\eta)\right)\cdot (t-s)}{ \Gamma_{[s,t]}}
\end{align*}
which yields a contradiction to (\ref{cantor}) when $\eta$ is sufficiently small.
\medskip

{\bf 2.}  Suppose that $x \in \partial (\overline{ I^{t,T}_{[z_1,z_2]}(s)})$. In this case, $\xi_{(s,x)}(\cdot)$ is either a minimal or maximal backward characteristic in $[s,t]$.
Moreover, for every $\eta>0$ there exists $x_{\eta}\in ]x-\eta,x[\bigcup ]x,x+\eta[ $ such that $x_\eta\notin \overline{ I^{t,T}_{[z_1,z_2]}(s)}$ and the unique forward characteristics $\xi^{(s,x_\eta)}(\cdot)$ emenating from $x_{\eta}$ at time $s$ is genuine  and does not cross $\xi_{(s,x)}(\cdot)$ in the time interval $[s,t]$. With the same computation in the previous step, we get
\[
\frac{u(s,x_\eta)-u(s,x)}{x_\eta-x}~\geq~-\frac{\Gamma_{[s,t]}}{t-s}
\]
and this also yields a contradiction to (\ref{cantor}) when $\eta$ is sufficiently small.
\end{proof}

We are now ready to prove our first main theorem.
\v

 {\bf Proof of Theorem \ref{SBV}}. The proof is divided into two steps:
\medskip

 {\bf Step 1}.
Fix $T >0$ and $z_1,z_2\in \R\setminus \mathcal{J}(T)$ with $z_1<z_2$ and, recalling (\ref{A}) and (\ref{Its}) let
 \begin{equation}\label{A2}
 \mathcal{A}~=~\mathcal{A}^T_{[z_1,z_2]},  \qquad A_t~=A^T_{[z_1,z_2]}(t)\qquad \mathrm{and}\qquad I^t(s) ~=~ I^{t,T}_{[z_1,z_2]}(s)
 \end{equation}
 for all $0<s<t\leq T$. We claim that the set
\begin{equation}\label{can1}
\mathcal{T}_{[z_1,z_2]}~:=~\left\{t\in [0,T]~:~\mu_t\left(A_t \right) ~\mathrm{does~not~vanish}\right\}
\end{equation}
is at most countable.
\v

{\bf (i).} Fix $\sigma\in ]0,T[$. By Proposition \ref{Charact} and (\ref{LinftyB}), one has
\[
\left|A_t\right|~\leq~|z_2-z_1|+ 2 \sqrt{\frac{2K_{T}e^T}{\sigma} \norm{u_0}_{{\bf L}^1(\R)}}\cdot T\qquad\forall t\in [\sigma,T],
\]
and the Oleinik-type inequality (\ref{O}) yields
\[
|Du(t,\cdot)|\big(A_t\big)~\leq~M_{\sigma}^T\qquad\forall t\in [\sigma,T]
\]
with
\[
M_{\sigma}^T~=~2\sqrt{\frac{2K_{T}e^T}{\sigma} \norm{u_0}_{{\bf L}^1(\R)}}+ \frac{2 K_T}{\sigma}\cdot \left(|z_2-z_1|+2\sqrt{\frac{2K_{T}e^T}{\sigma} \norm{u_0}_{{\bf L}^1(\R)}}\cdot T\right).
\]
Let the geometric functional $F_{\sigma}: [\sigma,T]\to [0,\infty[$  be defined by
 \[
    F_\sigma(t) ~=~ \left|\bigcup_{x \in \J(t)\bigcap A_t}I_{(t,x)}\p{\sigma/2}\right| ~=~\sum_{x \in \J(t)\bigcap A_t} \abs{ I_{(t,x)}\p{\sigma/2}} \qquad \forall t \in [\sigma,T]
  \]
where the second equality follows by the non-crossing property. By Corollaries \ref{base-inc} and \ref{base-bound}, the map $t\mapsto F_{\sigma}(t)$ is non-decreasing in $[\sigma,T]$ and uniformly bounded
\begin{equation}\label{F-bound}
\sup_{t\in \left[\sigma,T \right]} F_{\sigma}(t)~\leq~c_{T}(\sigma/2)\cdot\sup_{t\in\sigma,T}\left(|\nu_t|(A_t)\right)~\leq~c_{T}(\sigma/2)\cdot M_{\sigma}^T
\end{equation}
with $c_T(\sigma/2)$ defined in (\ref{cs}).
\v
\noindent {\bf(ii).} Assume that a Cantor part is present in  $\mathcal{A}$ at  time $t\in ]\sigma, T[$, i.e.,
\begin{equation}\label{cantor-present}
\mu_t\left(A_t\right)~\leq~-\alpha\qquad\mathrm{for~some~} \alpha>0,
\end{equation}
which by \eqref{E} is concentrated on $E_t := E^T_{[z_1,z_2]}(t)$. We will show that
\begin{equation}\label{f-jump}
  F_{\sigma}(t+)-F_\sigma(t)~\geq~\frac{\kappa_{[\sigma,T]}}{2}\cdot \alpha
\end{equation}
where $\kappa_{[\sigma,T]}$ is defined in Corollary \ref{rarefaction}.
It is sufficient to prove that
\[
 F_\sigma(t + \ve) - F_\sigma(t)~=~\left|  I^{t+\ve}(\sigma/2)\setminus I^t(\sigma/2)   \right|~\geq~\frac{\kappa_{[\sigma,T]}}{2}\cdot \alpha
\]
for any given $\ve\in ]0,T-t[$. By Lemma \ref{cantor-jump}, for $\mu_t$-a.e. $x\in E_{t}$ there exists $\eta_x>0$ such that
\begin{equation}\label{incc1}
  ]x-\eta_x, x+\eta_x [  ~\subset~ I^{t+\ve}(t).
\end{equation}
On the other hand, given $x \in E_{t}$ and $\eta>0$, we denote the interval
\[
  J_{x,\eta}^{\sigma/2} ~=~ ]\xi_{(t,x-\eta)}\p{\sigma/2},\xi_{(t,x+\eta)}\p{\sigma/2}[ \,,
\]
and Corollaries \ref{base-bound} and \ref{rarefaction} imply that
\begin{align*}
  \left|J_{x,\eta}^{\sigma/2} \setminus I^t(\sigma/2)\right| ~=&~ \xi_{(t,x+\eta)}(\sigma/2) -\xi_{(t,x-\eta)}(\sigma/2) - \left|J_{x,\eta}^{\sigma/2} \cap I^t(\sigma/2)\right| \nonumber \\
   ~\geq&~ \kappa_{[\sigma,T]}\cdot \left( u(t,x-\eta) - u(t,x+\eta) \right) + c_T(\sigma/2) \nu_t(]x-\eta,x+\eta[)\,.
\end{align*}
Furthermore, by \eqref{c1} and the definition of $E_t$, there exists $\eta_0 > 0$ such that
\begin{equation}
  \left|J_{x,\eta}^{\sigma/2} \setminus I^t(\sigma/2)\right|
       ~\geq~ - \frac{\kappa_{[\sigma,T]}}{2} \mu_{t}(]x-\eta,x+\eta[) \qquad\quad \forall \eta \in ]0,\eta_0] \label{JI-bound} \,.
\end{equation}
By the Besicovitch covering lemma, we can cover $\mu_{t}$-a.e. $E_{t}$ with countably many pairwise disjoint intervals $[x_j-\eta_j,x_j+\eta_j]$ where $\eta_j$ is chosen such that both \eqref{incc1}and \eqref{JI-bound} hold.
Proposition \ref{gen-char} (ii) implies that the intervals $J_{x_j,\eta_j}^{\sigma/2}$ are pairwise disjoint and by \eqref{incc1} we have that $J^{\sigma/2}_{x_j,\eta_j}$ is contained in $A_{\sigma/2}$. Therefore, it holds that
\[
    F_\sigma(t + \ve) - F_\sigma(t) ~=~ \left|  I^{t+\ve}(\sigma/2)\setminus I^t(\sigma/2)  \right| ~\geq~ \sum_j \left|J_{x_j,\eta_j}^{\sigma/2} \setminus I^t(\sigma/2)\right| \,.
\]
Applying \eqref{JI-bound} and then \eqref{cantor-present} to the above inequality yields
\begin{align*}
  F_\sigma(t + \ve) - F_\sigma(t) ~\geq~ -\frac{\kappa_{[\sigma,T]}}{2} \sum_j \mu_{t}\left([x_j-\eta_j,x_j+\eta_j]\right)
   ~\geq~ -\frac{\kappa_{[\sigma,T]}}{2} \mu_t\left(E_{t}\right)
  ~\geq~ \frac{\kappa_{[\sigma,T]}}{2} \alpha   \,,
\end{align*}
and therefore \eqref{f-jump} holds.
\v

{\bf (iii).} By the monotonicity of $F_\sigma$ and \eqref{F-bound}, $F_\sigma$ has at most countable many discontinuities on $[\sigma,T]$. Thus, for any given $\sigma \in ]0,T[$, \eqref{cantor-present}-\eqref{f-jump} imply that the set
\[
  \bigcup_{n \in \mathbb N }\sett{t \in [\sigma, T]}{ \mu_t(A_t) \leq -2^{-n} }~=~ \sett{t \in [\sigma, T]}{ \mu_t(A_t) < 0 }
\]
is at most countable and therefore,
\[
  \bigcup_{n \in \mathbb N} \sett{t \in [2^{-n}, T]}{ \mu_t(A_t) < 0 } ~=~  \mathcal{T}_{[z_1,z_2]}~~\textrm{is countable.}
\]

 {\bf Step 2.} To complete the proof, it is sufficient to show that for any given $T>0$, there exists an at most countable subset $\mathcal{T}_T$ of $[0,T]$ such that
 \begin{equation}\label{las}
u(t,\cdot)~\in~ SBV_{\loc}(\R)\qquad\forall t\in [0,T]\setminus\mathcal{T}_T\,.
\end{equation}

 For any $k\in\mathbb{Z}$, we pick a point $\bar{z}_k\in ]k,k+1[\backslash \mathcal{J}(T)$. Let $\xi_k(\cdot)$ be the unique genuine backward characteristic starting at point $(T,\bar{z}_k)$ for every $k\in\mathbb{Z}$ and  define
\[
\mathcal{A}^T_k~=~\mathcal{A}^{T}_{[\bar{z}_k,\bar{z}_{k+1}]}\bigcup \{(\xi_k(t),t): t\in [0,T]\}\quad\mathrm{and}\qquad A^T_k(t)~=~A^T_{[\bar{z}_k,\bar{z}_{k+1}]}(t)\bigcup \{\xi_k(t)\} \,.
\]
Due to the no-crossing property of two genuine backward characteristics in Proposition \ref{gen-char}, it holds that
\[
\bigcup_{k\in\mathbb{Z}}\mathcal{A}^T_k~=~ [0,T] \times \R  \qquad\mathrm{and}\qquad \bigcup_{k\in\mathbb{Z}} A^T_k(t)~=~\R\qquad\forall t\in [0,T].
\]
From Step 1, it holds that, for every $k\in\mathbb{Z}$, the set
\[
\{t\in [0,T]:\mu_t(A^T_k(t))~\neq~0\}~~\mathrm{is~countable}.
\]
Hence,
\[
\mathcal{T}_T~=~\{t\in [0,T]: \mu_t(A^T_k(t))~\neq~0~~\mathrm{for~some}~ k\in\mathbb{Z}\}~~\mathrm{is~also~countable}.
\]
and this yields (\ref{las}).
\endproof
%
%
\v

{\bf Acknowledgments.} This research by K.T.Nguyen was partially supported by a grant from
the Simons Foundation/SFARI (521811, NTK).

\end{document}